\newcommand{\R}{{\mathbb R}}
\newcommand{\N}{{\mathbb N}}
\newtheorem{prob}{Problem}
\newtheorem{thm}{Theorem}
\newtheorem{lem}{Lemma}
\newtheorem{cor}{Corollary}
\date{}
\newtheorem{obs}{Observation}
\DeclareMathOperator{\aff}{aff}
\DeclareMathOperator{\lin}{lin}
\title{Number of double-normal pairs in space.}
\author{Andrey Kupavskii\footnote{Moscow Institute of Physics and Technology, \'Ecole Polytechnique F\'ed\'erale de Lausanne; Email: {\tt kupavskii@yandex.ru} \ \ Research supported in part by the Swiss National Science Foundation Grants 200021-137574 and 200020-14453 and by the grant N 15-01-03530 of the Russian Foundation for Basic Research.}}
\begin{document}

\maketitle
\begin{abstract}Given a set $V$ of points in $\R^d$, two points $p$, $q$ from $V$ form a double-normal pair, if the set $V$ lies between two parallel hyperplanes that pass through $p$ and $q$, respectively, and that are orthogonal to the segment $pq$. In this paper we study the maximum number $N_d(n)$ of double-normal pairs in a set of $n$ points in $\R^d$. It is not difficult to get from the famous Erd\H os-Stone theorem that $N_d(n) = \frac 12(1-1/k)n^2+o(n^2)$ for a suitable integer $k = k(d)$ and it was shown in the paper by J. Pach and K. Swanepoel that $\lceil d/2\rceil\le k(d)\le d-1$ and that asymptotically $k(d)\gtrsim d-O(\log d)$.

In this paper we sharpen the upper bound on $k(d)$, which, in particular, gives $k(4)=2$ and $k(5)=3$ in addition to the equality $k(3)=2$ established by J. Pach and K. Swanepoel. Asymptotically we get $k(d)\le d- \log_2k(d) = d - (1+ o(1)) \log_2k(d)$ and show that this problem is connected with the problem of determining the maximum number of points in $\R^d$ that form pairwise acute (or non-obtuse) angles.
\end{abstract}

\section{Introduction}
Let $V$ be a set of $n$ points in $\R^d$. We say that two points $p$, $q$ from $V$ form a \textit{double-normal pair}, if the set $V$ lies between two parallel hyperplanes $H_p, H_q$ that pass through $p$ and $q$, respectively, and that are orthogonal to the segment $pq$. We call a double-normal pair $p,q$ \textit{strict}, if $V\cap(H_p\cup H_q) = \{p,q\}$. Following this definition, we may consider the (strict) double-normal graph on $V$. We say that $G$ is the \textit{(strict) double-normal graph of $V$} if $V$ is the vertex set of $G$ and two points from $V$ are connected by an edge iff they form a (strict) double-normal pair.

In 1957, Erd\H os \cite{Ea} conjectured that if any pair of points in $V\subset\R^d$ forms a double-normal pair, then $|V|\le 2^d$. This problem is better known in another, equivalent form: given a set $V\subset \R^d$ such that no three points from $V$ form an obtuse angle, is it true that $|V|\le 2^d$? This was proved by L. Danzer and B. Gr\" unbaum in \cite{DG}. L. Danzer and B. Gr\" unbaum in their paper asked the same question with a slight modification: all the triples in the set must form acute angles (instead of non-obtuse). In our terms, this question is the question of Erd\H os with double-normals changed to \textit{strict} double-normals. L. Danzer and B. Gr\" unbaum conjectured that the size of $V$, in which every two vertices form a strict double-normal pair, cannot be bigger than $2d-1$. It was believed in for a long time, till Erd\H os and F\"uredi in \cite{EF} disproved it, showing an unexpectedly different bound, which states that the size of such $V$  may be exponential in the dimension.

Formulated in double-normal graph terms, Erd\H os asked, how big a set $V\subset \R^d$ may be, if its double-normal graph is a complete graph. Or, what is the largest size $N(d)$ of a complete graph that may be represented as a double-normal graph in $\R^d$. Similarly, we can ask what is the largest size $N'(d)$ of a complete graph that may be represented as a \textit{strict} double-normal graph in $\R^d$. In general, it is a classical question of extremal graph theory to ask what are the possible values of a clique number in a graph belonging to a certain class of graphs. In the paper \cite{MS} Martini and Soltan asked another classical extremal-graph-theory-type question for double normal graphs: what are the maximum numbers $N_d(n)$ and $N'_d(n)$ of double-normal pairs and strict double-normal pairs formed by pairs of points from a set among all sets of $n$ points in $\R^d$?  In terms of double-normal graphs this question is formulated as follows: what is the maximum number of edges in a (strict) double-normal graph on $n$ vertices in $\R^d$?

It follows from the famous Erd\H os-Stone theorem that $N_d(n) = \frac 12(1-1/k)n^2+o(n^2)$ for a suitable integer $k = k(d)$, where $k$ is the largest integer such that for any $r\in \N$ we can find a double-normal graph in $\R^d$ that has a complete $k$-partite graph $K_k(r)$ with sizes of parts equal to $r$ as a subgraph. Analogously, $N'_d(n) = \frac 12(1-1/k')n^2+o(n^2)$ for a suitable integer $k' = k'(d)$. Note  that $N_d(n)\ge N'_d(n)$ and $k\ge k'$. Thus, to find the asymptotic of $N_d(n)$  in case $N_d(n)$ being quadratic all we need is to determine $k$. The same takes place for $N'_d(n)$.

In the case of the plane and a two-dimensional sphere, however, these functions are linear in $n$. This was shown by J. Pach and K. Swanepoel in \cite{PS2}. They, in fact, found exact values for the planar functions and their spherical analogues. In particular, they showed that $N'_2(n) = n$ and $N_2(n) = 3\lfloor n/2\rfloor.$

In a complementary paper \cite{PS1} they showed that already in $\R^3$ the functions are quadratic. More specifically, they showed that $k(3) =k'(3) =2$. In general, they proved the following

\begin{thm}[\cite{PS1}]\label{thps}
We have $\lceil d/2\rceil\le k'(d)\le k(d)\le d-1$ and asymptotically $k(d)\ge k'(d)\gtrsim d-O(\log d)$.
\end{thm}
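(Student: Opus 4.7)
For the lower bound $\lceil d/2\rceil\le k'(d)$, in the even case $d=2m$ I will give an explicit construction. Partition the coordinates of $\R^{2m}$ into $m$ pairs, and let $V_i$ consist of $r$ arbitrary unit vectors in the plane $\lin(e_{2i-1},e_{2i})$, no two of them antipodal. For $v=\cos\alpha\,e_{2i-1}+\sin\alpha\,e_{2i}\in V_i$ and $w=\cos\beta\,e_{2k-1}+\sin\beta\,e_{2k}\in V_k$ with $i\ne k$ we have $v\cdot w=0$ and $|v-w|^2=2$, and for any third point $x\in V_s$ a direct computation gives $(x-v)\cdot(w-v)=1-x\cdot v+x\cdot w$, which evaluates to $1-\cos(\gamma-\alpha)$ if $s=i$, to $1+\cos(\gamma-\beta)$ if $s=k$, and to $1$ otherwise---always inside $[0,2]$, with the endpoints $0$ and $2$ attained only at $x=v$ and $x=w$ respectively. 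Hence every cross-part pair is a strict double-normal pair, and we obtain a $K_m(r)$ inside the strict double-normal graph. For odd $d=2m+1$ an extra part must be squeezed into the remaining axis; since exact orthogonality of more than $m$ two-dimensional subspaces is impossible in $\R^{2m+1}$, I would place a small perturbed cluster around $\pm e_d$ and verify the slab condition to leading order in the perturbation---this is the subtlest step of the lower-bound construction.

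For the upper bound $k(d)\le d-1$ I will argue by contradiction: suppose $V\subset\R^d$ contains $K_d(r)$ in its double-normal graph with parts $V_1,\ldots,V_d$ and $r$ arbitrarily large. Fix representatives $v_i\in V_i$. For any $v_i'\in V_i$ and any $v_j'\in V_j$ with $j\ne i$, applying the double-normality of $(v_i,v_j')$ at $x=v_i'$ and of $(v_i',v_j')$ at $x=v_i$ yields the pair of inequalities $0\le(v_i'-v_i)\cdot(v_j'-v_i)\le|v_i'-v_i|^2$; equivalently, $V_j$ is squeezed into a slab of width $|v_i'-v_i|$ perpendicular to $v_i'-v_i$ for every $v_i'\in V_i$. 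As $r\to\infty$ the normalized directions $(v_i'-v_i)/|v_i'-v_i|$ accumulate on $S^{d-1}$, and for every accumulation direction $u$ one gets $u\cdot(v_j'-v_i)=0$ for all $v_j'\in V_j$, forcing $V_j-v_i$ into the orthogonal complement of the tangent space of $V_i$ at $v_i$. A careful tally of these dimensional debts across all $d$ parts, combined with the requirement $\dim\aff\{v_1,\ldots,v_d\}=d-1$, should exceed the ambient dimension $d$ and give the desired contradiction. The main obstacle is making this limit step uniform enough to control all $d$ parts simultaneously.

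For the asymptotic bound $k(d)\gtrsim d-O(\log d)$, the orthogonal-planes construction is limited to $d/2$ parts, so a fundamentally better construction is needed. I will try to pack $k\sim d-\log_2 k$ almost-orthogonal 2-planes in $\R^d$ either probabilistically (random 2-planes plus a concentration argument to preserve the slab condition) or algebraically (e.g., via Hadamard matrices or mutually unbiased bases), placing unit-vector clusters on each and correcting with small perturbations or different radii. The main obstacle is that exact orthogonality is impossible past $d/2$ planes, so one needs a quantitative version of the slab condition that is robust to small inner products between planes; the logarithmic loss should match the exponential improvement appearing in Erd\H os--F\"uredi-type constructions of sets with pairwise acute angles, which the abstract identifies as the problem closely connected to ours.
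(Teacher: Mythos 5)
First, a point of orientation: Theorem~\ref{thps} is quoted from Pach and Swanepoel \cite{PS1}; this paper contains no proof of it (it only proves strengthenings of both bounds), so your attempt has to be judged against the known arguments. The one piece of your proposal that is actually complete and correct is the even-dimensional lower-bound construction: $m$ pairwise orthogonal coordinate $2$-planes in $\R^{2m}$, with $r$ non-antipodal unit vectors in each, and your computation $(x-v)\cdot(w-v)=1-x\cdot v+x\cdot w$ does verify the strict slab condition for every cross-part pair, giving $k'(2m)\ge m$. But this does not prove the stated bound: $\lceil d/2\rceil$ is only nontrivial in the odd case, where you need $m+1$ parts in $\R^{2m+1}$, and monotonicity from the even case gives only $k'(2m+1)\ge m$. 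Your plan of ``a small perturbed cluster around $\pm e_d$'' is exactly the part you would have to carry out, and it is not routine: once the extra part is not confined to its own $2$-plane, the inner products $x\cdot v$ no longer vanish for cross-part triples and the clusters must be shrunk in a coordinated way. (Pach and Swanepoel obtain the odd case from Theorem~\ref{thps2}, applied to the $m+1$ vertices of a simplex in $\R^{m}$, which is why the statement reads $\lceil d/2\rceil$.)

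The other two bounds have genuine gaps. For $k(d)\le d-1$, your derivation of $0\le(v_i'-v_i)\cdot(v_j'-v_i)\le\|v_i'-v_i\|^2$ is fine, but the ensuing limit step does not work as stated: to conclude $u\cdot(v_j'-v_i)=0$ for an accumulation direction $u$ you need the slab widths $\|v_i'-v_i\|$ to tend to $0$ \emph{relative to the scale of $V_j$}, and since the configuration changes with $r$ there is no a priori normalization that makes this happen for all parts simultaneously. This is precisely why the actual proof (Proposition~8 of \cite{PS1}, and Steps~1--3 of Section~2 above) first applies a Ramsey-type extraction (Pach's theorem) inside each part to find nearly collinear triples with controlled ratios of distances, orders the parts by diameter, and only then passes to a limit on the unit sphere; ``a careful tally of dimensional debts'' is a hope, not an argument, and note that the correct conclusion of that tally is $d\ge k+1$ (the $k$ near-orthogonal unit vectors $u_{ii}$ plus one direction $u_{12}$ orthogonal to all of them), not an inequality obtained from $\dim\aff\{v_1,\dots,v_d\}$. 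For the asymptotic bound $k'(d)\gtrsim d-O(\log d)$, you give only a research plan, and the proposed mechanism (packing more than $d/2$ \emph{almost}-orthogonal $2$-planes) is not the one that works: the actual construction (Theorem~\ref{thps2}, reproved in Section~3) takes an Erd\H os--F\"uredi acute set of $m$ points in dimension $O(\log m)$ and adjoins $m$ \emph{exactly} orthogonal new coordinates, one per part, so the $2$-planes $\Pi_i$ stay genuinely orthogonal in the new directions and the logarithmic loss is exactly the dimension $D'(m)$ of the acute set. You correctly identify the link to acute sets, but no part of this bound is proved in your proposal.
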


Although Theorem \ref{thps} gives $k(3)=k'(3)=2$, it does not allow to decide whether $k(4)$ or $k'(4)$ equals 2 or 3. In this paper we improve both the upper and the lower bounds from Theorem \ref{thps}, which gives, in particular, $k(4)=k'(4)=2$. While the improvement of the upper bound is easy to state, it is more difficult with the lower bound, since it it stated in Theorem \ref{thps} somewhat imprecisely.  Thus, we first formulate a more accurate version of the asymptotical lower bound:

\begin{thm}[\cite{PS1}]\label{thps2} Let $m\ge 2$. Suppose that there exist $m$ points $p_1,\ldots, p_m \in \R^d$
and $m$ unit vectors $u_1,\ldots, u_m \in \R^d$ such that, for all triples of distinct $i, j, k$,
the angle $\angle p_ip_jp_k$ is acute, and
$$ \langle u_i, p_i-p_j\rangle < \langle u_i, p_k-p_j\rangle < \langle u_i, p_j-p_i\rangle. $$
Then, for any $N \in \N$, there exists a strict double-normal graph in $\R^{d+m}$
containing a complete m-partite $K_m(N)$. In particular, $k'(d + m) \ge m$.
\end{thm}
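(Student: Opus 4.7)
My plan is to construct the required complete $m$-partite subgraph explicitly by a perturbation argument. In $\R^{d+m} = \R^d \times \R^m$, with standard basis $e_1, \ldots, e_m$ of the new $\R^m$ factor, I would define, for each $i \in [m]$ and $j \in [N]$, a point
\[
q_{ij} = (p_i, 0) + \varepsilon\bigl(\alpha_{ij} u_i,\, \beta_{ij} e_i\bigr),
\]
for a small parameter $\varepsilon > 0$ and appropriately chosen real scalars $\alpha_{ij}, \beta_{ij}$ (with the $\beta_{ij}$'s distinct within each cluster). The extra coordinates separate the $m$ clusters around the lifted points $(p_i,0)$, while the $u_i$-direction in $\R^d$ and the $e_i$-direction in $\R^m$ provide the intra-cluster spread.

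To establish that $K_m(N)$ is realized, I would pick an arbitrary cross-cluster pair $(q_{i_1 j_1}, q_{i_2 j_2})$ with $i_1 \neq i_2$ and verify that every other $q_{kl}$ projects strictly into the open interval $(0, \|q_{i_2 j_2} - q_{i_1 j_1}\|^2)$ along the segment. Three cases arise. When $k \notin \{i_1, i_2\}$, the dominant ($\varepsilon = 0$) contribution is $\langle p_k - p_{i_1}, p_{i_2} - p_{i_1}\rangle$, which lies in the desired open interval by the acuteness of the angles at $p_{i_1}$ and $p_{i_2}$ in the triangle $p_k p_{i_1} p_{i_2}$; the $O(\varepsilon)$ perturbations do not spoil this for small $\varepsilon$. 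The two remaining cases are $k = i_1$, $l \neq j_1$, and the symmetric $k = i_2$, $l \neq j_2$; there the constant term vanishes and one must analyze the $O(\varepsilon)$ expansion, which is where the hypothesis on the $u_i$'s enters.

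The main obstacle lies precisely in these intra-cluster cases. A naive linear ansatz $\alpha_{ij} = c\beta_{ij}$ fails: swapping the indices $l$ and $j_1$ within the same cluster flips the sign of the leading term $(\alpha_{i_1 l} - \alpha_{i_1 j_1}) \langle u_{i_1}, p_{i_2} - p_{i_1}\rangle$, so positivity cannot hold for all unordered pairs. I plan to overcome this by choosing $\alpha_{ij}$ as a convex (roughly quadratic) function of $\beta_{ij}$, calibrated so that the $O(\varepsilon)$ contribution becomes a positive multiple of $(\beta_{i_1 l} - \beta_{i_1 j_1})^2$, hence strictly positive whenever $\beta_{i_1 l} \neq \beta_{i_1 j_1}$. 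The first inequality of the hypothesis, $\langle u_i, p_i - p_j\rangle < \langle u_i, p_k - p_j\rangle$, supplies the positivity of the factors $\langle u_{i_1}, p_{i_2} - p_{i_1}\rangle$ needed for this calibration, while the second inequality $\langle u_i, p_k - p_j\rangle < \langle u_i, p_j - p_i\rangle$ is what lets the single quadratic calibration work uniformly as the target cluster index $i_2$ varies and what drives the upper-bound strict inequality in the symmetric case. Once this holds, each cross-cluster pair is a strict double-normal pair, so $K_m(N)$ embeds as a subgraph of the strict double-normal graph on $\{q_{ij}\}$, yielding $k'(d+m) \ge m$.
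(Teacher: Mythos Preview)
Your global architecture matches the paper's: embed $\R^d$ in $\R^{d+m}$, assign each cluster its own orthogonal direction $e_i$, and place the $N$ points of cluster $i$ inside the $2$-plane $\Pi_i$ through $p_i$ spanned by $u_i$ and $e_i$, all within $\varepsilon$ of $p_i$. Your treatment of the case $k\notin\{i_1,i_2\}$ (acuteness of $\angle p_kp_{i_1}p_{i_2}$ survives an $O(\varepsilon)$ perturbation) is exactly what the paper does in its Step~4.

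The gap is in the intra-cluster case. With your ansatz the leading term is
\[
\bigl\langle q_{i_1l}-q_{i_1j_1},\,q_{i_2j_2}-q_{i_1j_1}\bigr\rangle
=\varepsilon\,(\alpha_{i_1l}-\alpha_{i_1j_1})\,L_{i_2}+O(\varepsilon^{2}),
\qquad L_{i_2}:=\langle u_{i_1},p_{i_2}-p_{i_1}\rangle>0.
\]
No choice of $\alpha$ as a function of $\beta$ turns this $O(\varepsilon)$ term into a multiple of $(\beta_{i_1l}-\beta_{i_1j_1})^{2}$: with $\alpha=c\beta^{2}$ it equals $\varepsilon cL_{i_2}(\beta_{i_1l}^{2}-\beta_{i_1j_1}^{2})$, which still changes sign under $l\leftrightarrow j_1$. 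The square you have in mind, $\tfrac12(\beta_{i_1l}-\beta_{i_1j_1})^{2}$, appears only after rescaling the $u_i$-displacement to order $\varepsilon^{2}$ (so that it balances the $e_{i_1}$-contribution $-\varepsilon^{2}\beta_{i_1j_1}(\beta_{i_1l}-\beta_{i_1j_1})$), and even then only for the single value $c=1/(2L_{i_2})$. Since $L_{i_2}$ genuinely varies with $i_2$, one quadratic cannot produce that identity for every target cluster, contrary to your uniformity claim.

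The paper resolves this not with a parabola but with a \emph{circle} $C_i\subset\Pi_i$ through $p_i$, of radius $R$ chosen so that every projection $p_j'$ lands in the open half-diameter $(c_i,q_i)$; the second hypothesis gives $\max_j L_j<2\min_j L_j$, which is exactly what makes such an $R$ exist. The points $x_1,\dots,x_N$ are then placed \emph{recursively} on the arc near $p_i$: at each step $x_{s+1}$ is taken close enough to $p_i$ that two explicit chord conditions (one governing $\angle x_{s+1}x_s b$, the other $\angle x_s x_{s+1} a$, where $a,b$ bound the segment of admissible $z$'s) hold. So your convex-curve intuition and your reading of the role of the inequality $\langle u_i,p_k-p_j\rangle<\langle u_i,p_j-p_i\rangle$ are both correct, but the construction needs the right curve \emph{and} a nontrivial inductive choice of the points on it, not merely a functional relation $\alpha=\alpha(\beta)$.
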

Denote by $D''(m)$ the minimal dimension $d$ in which points and vectors with conditions described in Theorem \ref{thps2} exist. Then it is possible to state the result of Theorem \ref{thps2} in the following form: $k'(d+m)+D''(k'(d+m))\ge d+m$ or, changing the notation,
\begin{equation}\label{e00} k'(d)+D''(k'(d))\ge d.\end{equation}
We are almost ready to state both new lower and upper bounds. But before that we need to define two functions. Let $D(m)$ be the minimal dimension $d$ in which $m$ points forming only non-obtuse angles exist. Let $D'(m)$ be the analogous function for acute angles. We have $D(N(d))=d, D'(N'(d))=d$. It is also clear that $D(m)\le D'(m)\le D''(m)$. From the result of Danzer and Gr\" unbaum we know that $D(m) = \lceil \log_2 m\rceil$, while from the result of V. Harangi \cite{H}, who improved the bound due to Erd\H os and F\" uredi to $N'(d)\ge 1.2^d$ for large $d$, we have $D'(m)\le \log_{1.2} m$ for large $m$.

\begin{thm}\label{thmain} We have
\begin{align} \label{e01} k(d)+D(k(d))&\le d,\\ \label{e02} k'(d)+D'(k'(d))&\ge d.\end{align}

Numerically, we have $k(d)+\log_2 k(d)\le d$ for all $d$ and $k'(d)+\log_{1.2} k'(d)\ge d$ for large $d$.
\end{thm}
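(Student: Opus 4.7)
For (\ref{e01}), let $k = k(d)$, so that for every $r \ge 1$ there is a set $V^{(r)} \subset \R^d$ whose double-normal graph contains $K_k(r)$ with parts $V_1^{(r)}, \ldots, V_k^{(r)}$. My plan is a compactness argument. After rescaling each configuration suitably and passing to a subsequence, I aim to extract, for each $i$, a unit vector $u_i \in \R^d$ and points $v_i^{(s)} \in V_i^{(r)}$ such that $(v_i^{(s)} - v_i^{(1)})/|v_i^{(s)} - v_i^{(1)}| \to u_i$ and $|v_i^{(s)} - v_i^{(1)}| \to \infty$ (so that $(v_i^{(s)} - v_j)/|v_i^{(s)} - v_j|$ also tends to $u_i$ for any fixed $v_j$). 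For $j \ne i$ and any $v_j \in V_j$, the pair $(v_j, v_i^{(s)})$ is a double-normal pair, and the slab conditions applied to $w = v_i^{(1)}$ and to $w$ ranging over $V_j$, together with the analogous consequences of the pair $(v_i^{(1)}, v_j)$ tested at $w = v_i^{(s)}$, should yield in the limit a single constant $c_i$ with $\langle u_i, v \rangle = c_i$ for all $v \in V_j$, $j \ne i$, and for $v = v_i^{(1)}$. Each $V_j$ ($j \ne i$) then lies in the affine hyperplane $\{\langle u_i, x \rangle = c_i\}$, and in particular its spreading direction $u_j$ is orthogonal to $u_i$. So $u_1, \ldots, u_k$ are pairwise orthogonal, and the representatives $p_i := v_i^{(1)}$ all lie in the $(d-k)$-dimensional affine subspace $\bigcap_i \{\langle u_i, x \rangle = c_i\}$; being a clique in the double-normal graph, they form only non-obtuse angles, and so $D(k) \le d - k$.

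For (\ref{e02}), the plan is to remove the auxiliary unit-vector hypothesis from Theorem~\ref{thps2}. Given $m$ acute points $p_1, \ldots, p_m \in \R^{d'}$, I will construct directly a strict double-normal graph in $\R^{d' + m}$ containing $K_m(N)$ for every $N$. Place the $i$-th part as a cluster $V_i = \{(p_i, q_i^{(t)}) : t = 1, \ldots, N\}$ inside $\R^{d'} \times \R^m = \R^{d' + m}$, and choose the auxiliary points $q_i^{(t)} \in \R^m$ so that, in the inner products arising in the double-normal pair conditions, the $\R^{d'}$-contribution (the quantity $\langle p_l - p_i, p_j - p_i \rangle$, strictly positive by acuteness) dominates in the cross-part constraints, while the $\R^m$-contribution provides the correct sign in the within-part constraints (where the $\R^{d'}$-part vanishes). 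Setting $d = d' + m$ gives $k'(d) \ge m$ whenever $D'(m) \le d - m$, and optimizing over $m$ produces (\ref{e02}); equivalently, this shows $D''(m) = D'(m)$, so that $D''$ in (\ref{e00}) can be replaced by $D'$.

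The hardest step in (\ref{e01}) will be the passage to the limit: I must rescale the sequence $V^{(r)}$ so that each $V_i^{(r)}$ has a well-defined limiting spreading direction $u_i$ along which $|v_i^{(s)} - v_i^{(1)}| \to \infty$, since it is this unboundedness that forces the slab normals $(v_i^{(s)} - v_j)/|v_i^{(s)} - v_j|$ to converge to $u_i$ and thus makes the limit identity $\langle u_i, v \rangle = c_i$ survive. The hardest step in (\ref{e02}) will be the choice of the auxiliary points $q_i^{(t)}$, in particular satisfying the within-part double-normal conditions: a naive cluster along a single line in $\R^m$ does not work, because the conditions $\langle q_i^{(\tau)} - q_i^{(s)}, q_j^{(t)} - q_i^{(s)} \rangle \ge 0$ fail for non-extremal $q_i^{(s)}$, so a more carefully designed arrangement tailored to the acute-angle structure of the $p_i$'s will be required.
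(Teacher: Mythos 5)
Your approach to (\ref{e01}) has a fatal gap at exactly the step you flag as hardest. To turn the one-sided slab inequality $\langle v_j - v_i^{(1)}, v_i^{(s)} - v_i^{(1)}\rangle \le \|v_j - v_i^{(1)}\|^2$ into the limit identity $\langle u_i, v_j - v_i^{(1)}\rangle = 0$, you must divide by $\|v_i^{(s)} - v_i^{(1)}\|$ and send it to infinity \emph{while all of $V_j$, $j \ne i$, stays bounded}. Demanding this for every $i$ simultaneously is self-contradictory: for $i=1$ it forces $V_2$ to be bounded while $V_1$ spreads unboundedly, and for $i=2$ the reverse; no global rescaling (or even a separate rescaling per $i$) can satisfy both, since the ratio of the spread of $V_1$ to the spread of $V_2$ cannot exceed and be exceeded by every constant at once. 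The failure is not an artifact: in the extremal configurations of Section 3 each part is a tiny arc while inter-part distances are of order one, so the slab conditions genuinely yield only the inequality $\langle u_i, v_j - v_i^{(1)}\rangle \ge 0$, never the equality, and the other parts do not collapse into hyperplanes orthogonal to the spreading directions. The paper's proof instead accepts that the parts have wildly different (hierarchically decreasing) diameters, selects four nearly collinear points per part via Pach's theorem, proves only \emph{approximate} orthogonality --- with the relation $\langle u_{ii}, u_{jl}\rangle = o(1)$ for three distinct indices being the genuinely new and delicate part (the planar case analysis with the sphere $S$ and Lemma \ref{lem1}) --- and then, because limit points of the $b_i^{(n)}$ may coincide, must invoke a stability version of the Danzer--Gr\"unbaum theorem (Lemma \ref{lem2}, angles at most $\pi/2+\delta$), which it proves does \emph{not} follow from the exact version by naive compactness. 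None of this machinery is replaceable by your exact-orthogonality scheme.

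For (\ref{e02}) your high-level plan coincides with the paper's: eliminate the auxiliary unit vectors of Theorem \ref{thps2} so that $D''$ is replaced by $D'$, and build each part as a small cluster attached to $p_i$ using one fresh coordinate of $\R^m$. You also correctly diagnose that the within-part double-normal conditions are the obstruction and that a collinear cluster fails. But the construction you defer to is the entire content of the proof, and the paper's resolution is specific: since an acute set is in strictly convex position, each $p_i$ admits a supporting hyperplane meeting the hull only at $p_i$, and its inward normal $u_i$ plays the role of the hypothesized vector of Theorem \ref{thps2}; the cluster $V_i$ is then placed on a short arc, near $p_i$, of a circle lying in the $2$-plane spanned by $u_i$ and the new direction $v_i$, the circle chosen large enough to contain the projections of all other $p_j$ onto that plane, with a careful recursive choice of the arc points to keep all angles $\angle z x_s x_l$ acute for $z$ in the relevant segment. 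Without this (or an equivalent) explicit arrangement, the claim that the auxiliary points can be "tailored to the acute-angle structure" remains an unproved assertion, so (\ref{e02}) is not established either.
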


The upper bound from Theorem \ref{thmain} together with the lower bound from Theorem \ref{thps} allows us to state the following corollary, which extends the result $k(3)=k'(3)=2$ due to Pach and Swanepoel:

\begin{cor} We have $k'(4) = k(4)=2, k'(5) =  k(5)=3, k'(7) = k(7) =4$.
\end{cor}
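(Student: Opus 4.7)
The plan is to combine the upper bound \eqref{e01} of Theorem \ref{thmain} with the lower bound $k'(d)\ge \lceil d/2\rceil$ from Theorem \ref{thps}, using the explicit Danzer--Gr\"unbaum value $D(m)=\lceil \log_2 m\rceil$ recorded just before Theorem \ref{thmain}. Since in general $k'(d)\le k(d)$, it is enough, in each of the three dimensions $d=4,5,7$, to produce a lower bound on $k'(d)$ and a matching upper bound on $k(d)$; the two then pinch $k'(d)=k(d)$ to the desired integer.

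The lower half is immediate from Theorem \ref{thps}: $k'(4)\ge 2$, $k'(5)\ge 3$, and $k'(7)\ge 4$. For the upper half I substitute $D(m)=\lceil\log_2 m\rceil$ into \eqref{e01}, getting $k(d)+\lceil\log_2 k(d)\rceil\le d$. Because the function $m\mapsto m+\lceil\log_2 m\rceil$ is non-decreasing in $m$, to rule out $k(d)\ge k_0$ it suffices to verify $k_0+\lceil\log_2 k_0\rceil>d$ at the smallest forbidden $k_0$. For $d=4$ take $k_0=3$: $3+\lceil\log_2 3\rceil=3+2=5>4$, hence $k(4)\le 2$. For $d=5$ take $k_0=4$: $4+\lceil\log_2 4\rceil=4+2=6>5$, hence $k(5)\le 3$. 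For $d=7$ take $k_0=5$: $5+\lceil\log_2 5\rceil=5+3=8>7$, hence $k(7)\le 4$. Combining these upper bounds with the lower bounds on $k'(d)$ via $k'(d)\le k(d)$ gives the three claimed equalities.

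There is essentially no obstacle; the corollary is a numerical unpacking of Theorem \ref{thmain}. The only thing one has to notice is that the ceiling $\lceil\log_2 m\rceil$ jumps precisely at $m=3$ and $m=5$, and these jumps are exactly what create the sharp thresholds at $d=4,5,7$ (whereas for $d=6$, for instance, the bound \eqref{e01} allows $k(6)$ to be either $3$ or $4$, which is why the corollary skips that dimension).
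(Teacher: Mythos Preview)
Your argument is correct and matches exactly what the paper does: it states the corollary as an immediate consequence of combining the upper bound \eqref{e01} of Theorem~\ref{thmain} (with $D(m)=\lceil\log_2 m\rceil$) and the lower bound $k'(d)\ge\lceil d/2\rceil$ from Theorem~\ref{thps}. Your numerical checks at $d=4,5,7$ are precisely the intended unpacking.
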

Thus, the next value to be determined is $k(6)$. In general, we ask if the bound (\ref{e02}) is sharp:
\begin{prob}\label{co1} Is it true that we have $k'(d)+D'(k'(d))= d$ and, moreover, $k(d)+D'(k(d))= d$?
\end{prob}
It would follow from a positive answer to Problem \ref{co1} that $k(d)=k'(d)$ for all $d$. We formulate this as a separate problem:
\begin{prob}\label{co2} Is it true that we have $k'(d)= k(d)$ for all $d$?
\end{prob}

In Section 2 we present the proof of the upper bound from Theorem \ref{thmain}. In Section 3 we give the proof of the lower bound from Theorem \ref{thmain}.\\

We say that a graph $G$ has a (strict) double-normal representation in $\R^d$, if there is a (strict) double-normal graph in $\R^d$ isomorphic to $G$.
 Instead of double-normal representations and strict double-normal representations  one can consider \textit{almost double-normal} representations. We say that a graph $G$ admits an almost double-normal representation in $\R^d$, if for any $\delta >0$ the graph may be represented in $\R^d$ by a set $V$ of points, with two points $x,y$ connected by an edge if for any $z\in V$ the angles $\angle xyz, \angle yxz \le \frac {\pi}2+\delta$.

Denote by $k_1(d)$ the analogue of $k,k'$ for this type of representations. In fact, slightly modifying the proof of both bounds in Theorem \ref{thmain}, one can prove
\begin{thm}\label{thmalmost} We have
$$k_1(d)+D(k_1(d)) = d, \ \ \text{that is, }\ k_1(d)+\lceil \log_2 k_1(d)\rceil = d.$$
\end{thm}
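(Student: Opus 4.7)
The plan is to prove the two inequalities $k_1(d) + D(k_1(d)) \le d$ and $k_1(d) + D(k_1(d)) \ge d$ separately, each by a mild modification of the corresponding half of Theorem~\ref{thmain}. For the upper bound, the idea is that an almost double-normal representation, in the limit $\delta \to 0$, degenerates to a genuine (non-strict) double-normal representation, so the argument yielding $k(d)+D(k(d))\le d$ should apply with only superficial changes. Concretely, I would set $k := k_1(d)$ and, for each $N$, fix an almost double-normal representation of $K_k(N)$ in $\R^d$ with tolerance $\delta_N = 1/N$; after rescaling to bounded diameter and passing to a subsequential limit, one obtains a genuine double-normal representation in $\R^d$ of a graph containing $K_k(N)$. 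Then I would run the projection step from the proof of~(\ref{e01}): pick one representative from each of the $k$ parts, project onto the orthogonal complement of their affine span, and extract $k$ points in $\R^{d-k}$ with pairwise non-obtuse angles. The Danzer--Gr\"unbaum bound then gives $k\le 2^{d-k}$, i.e.\ $D(k)\le d-k$.

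For the lower bound, the plan is to weaken the acute-angle hypothesis in the construction of Theorem~\ref{thps2} to a non-obtuse one, which is precisely the slack gained by asking only for an almost (rather than strict) double-normal representation. Let $k$ and $r=D(k)$ satisfy $k\le 2^r$; take $k$ points $p_1,\dots,p_k\in\R^r$ with pairwise non-obtuse triangle angles (for instance, a $k$-subset of vertices of the $r$-cube), choose unit vectors $u_i\in\R^r$ satisfying the non-strict analogues of the inequalities of Theorem~\ref{thps2}, and mimic the construction in $\R^{r+k}$. For every $\delta>0$, a suitable rescaling of the $k$ extra coordinates yields a representation of $K_k(N)$ whose triangle angles at the endpoints of each edge do not exceed $\pi/2+\delta$, and this gives $k_1(r+k)\ge k$. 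By the monotonicity of $k+D(k)$, the inequality $k_1(d)+D(k_1(d))\ge d$ follows.

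The step I expect to be the main obstacle is the compactness argument in the upper bound. \emph{A priori}, as $\delta_N\to 0$, the points of the approximate representations can collapse together or escape to infinity, and one must rescale and pass to a subsequence in a way that preserves the combinatorial structure of the $K_k(N)$-subgraph in the limit. A clean route is a Ramsey/pigeonhole reduction, in the spirit of the Erd\H os--Stone framework underlying the definition of $k_1(d)$, to a controlled $K_k(N')$ before taking the limit, and only then to invoke the projection argument from the proof of Theorem~\ref{thmain}. A secondary, more routine, point in the lower bound is the explicit construction of the non-strict auxiliary vectors $u_i$ on the cube configuration, which should come from the coordinate directions of $\R^r$ up to small perturbations.
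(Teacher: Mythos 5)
The paper gives no written proof of this theorem --- it explicitly leaves to the reader the check that the proof of Theorem~\ref{thmain} survives the relaxation to almost double-normal representations --- so the benchmark is whether your modifications of the two halves of that proof actually go through. Your lower-bound half does: replacing the acute configuration $p_1,\dots,p_m$ in the proof of~(\ref{e02}) by a non-obtuse one in $\R^{\lceil\log_2 m\rceil}$ (a subset of cube vertices) is exactly the intended change. Each point of a non-obtuse configuration is still a vertex of the convex hull, so the supporting hyperplanes and normals of Step~1 exist, and the only angles in Step~4 that cease to be strictly acute are those between points near three distinct $p_i,p_j,p_k$ with $\angle p_ip_jp_k=\pi/2$; these become at most $\pi/2+O(\epsilon)$, which is precisely the almost-double-normal tolerance. (The fact that $m+\lceil\log_2 m\rceil$ skips some integers, so the literal equality $k_1(d)+D(k_1(d))=d$ cannot hold for every $d$ and must be read as the pair of inequalities $k_1(d)+D(k_1(d))\le d$ and $k_1(m+D(m))\ge m$, is an imprecision already present in the paper's statement, not something your argument introduces.)

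The upper-bound half has a genuine gap. You propose to let $\delta_N\to 0$, rescale, pass to a subsequential limit, and obtain an \emph{exact} double-normal representation containing $K_k(N)$, thereby reducing to~(\ref{e01}). This limit does not preserve the $K_k(N)$: nothing prevents the points of a part from collapsing to a single point as $\delta\to 0$, and the known constructions in fact force this (each part sits in an $\epsilon$-neighbourhood of one point $p_i$, and the neighbourhood must shrink with the tolerance); a Ramsey/pigeonhole preselection of a sub-$K_k(N')$ cannot manufacture separation that is not there. A strong sanity check: if your reduction worked it would prove $k_1(d)\le k(d)$, hence $k_1(d)=k(d)$ and $k(d)+D(k(d))=d$, which for $d=6$ yields $k(6)=4$ and settles a question the paper explicitly leaves open. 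The paper flags this exact trap in the remark after Lemma~\ref{lem2}: the robust Danzer--Gr\"unbaum statement does \emph{not} follow from the exact one ``via a direct compactness argument,'' for the same collapsing reason (that is why Lemma~\ref{lem2} is proved from scratch, with multiplicities). The intended argument is instead to rerun the proof of Theorem~\ref{thbase} directly on the almost-double-normal configurations: every use of the double-normal hypothesis there is an inequality $\angle xyz\le\pi/2$, which degrades to $\le\pi/2+\delta$; since Steps~1--2 are already an $\epsilon$-asymptotic argument producing only $\epsilon$-orthogonality, taking $\delta=\delta(\epsilon)\to 0$ changes nothing in the limit of Step~3, and Step~4 with Lemma~\ref{lem2} (already stated in the $\delta$-robust form for precisely this purpose) gives $\dim\lin U_2\ge\log_2 k_1(d)$. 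Relatedly, your one-line summary of the exact-case upper bound (``project one representative per part onto the orthogonal complement of their affine span'') is not the paper's argument and would not produce the $k$ mutually orthogonal diameter directions $u_{ii}$ \emph{plus} the additional $\log_2 k$ dimensions spanned by the $u_{ij}$; the four-point selection $A_i,a_i,b_i,C_i$ and the Case~1/Case~2 analysis of Step~2 are needed for that.
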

Thus, we can determine the exact value of the $k$-function in this somewhat modified scenario. We leave it to the reader to check that the proof of theorem \ref{thmain} works for this situation.\\

Throughout the paper we use the following notations. The scalar product of two vectors $u,v$ is denoted by $\langle u,v \rangle$. The linear span of a set of vectors $U$ we denote $\lin U$, the affine span of a set of points $S$ we denote $\aff S$. By $\dim W$ we denote the dimension of a subspace or a plane $W$. The angle formed by points $x,y,z$ with sides $yx, yz$ and its angular measure we denote by $\angle xyz$. Throughout the paper all angles have angular measure from $(0,\pi)$. The Euclidean distance between two points $x,y$ we denote $\|x-y\|$.

\section{Upper bound on $\mathbf{k(d)}$}

We follow the steps of the proof from \cite{PS1} used to obtain the upper bound on $k(d)$. The goal is to prove a strengthened version of Proposition 8 from \cite{PS1}, which is formulated in the forthcoming theorem. Despite that some parts of the proof of the theorem are identical to the ones from \cite {PS1}, we will present an almost complete proof of the theorem for the sake of clarity of exposition. In what follows we do not specify the parts that are taken from \cite{PS1}, but after the proof we are going to comment on what is new and what is old in the statement and in the proof.

\begin{thm}\label{thbase} Put $k=k(d)$. There exists a family of (not necessarily distinct) vectors $u_{ij}$, where $i,j=1,\ldots,k$ so that the following conditions hold:
 \begin{align}
  &\label{e1}U_1:=\{u_{11}, \ldots, u_{kk}\}\text{ form an orthogonal set of vectors.}\\
  &\label{e2}u_{ii}\text{ is orthogonal to } u_{jl}\text{ for any }i,j,l=1,\ldots, k,\text{ such that }j\ne l.\\
  &\label{e3}\dim\lin U_2\ge \log_2k\text{ for } U_2 := \{u_{sl}: s,l=1\ldots, k, s\neq l\}. \end{align}
\end{thm}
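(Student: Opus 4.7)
The plan is to build the vectors $u_{ij}$ out of a very large complete $k$-partite subgraph $K_k(r)$ of some double-normal graph in $\R^d$, whose existence is guaranteed by the definition of $k=k(d)$. As $r\to\infty$ along a suitable subsequence, I will extract limiting data and use it to define the $u_{ij}$.

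First I would perform a Ramsey/compactness step: for each ordered pair $i\ne j$ and each $\varepsilon>0$, after passing to subsets $V_i'\subset V_i$ of size still tending to infinity, the unit directions $(p-q)/\|p-q\|$ for $p\in V_i'$, $q\in V_j'$ become $\varepsilon$-close to a single unit vector; call the limit $u_{ij}$. Simultaneously, rescale so that the typical inter-part distance is $1$; in the limit configuration the slab condition persists, i.e.\ for every $i\ne j$ and every $l$, the projection of $V_l$ onto the $u_{ij}$-axis is trapped in an interval of length at most the $u_{ij}$-length of a single inter-part segment. This is what makes each off-diagonal direction "almost orthogonal" to the other parts and will be the source of (\ref{e2}) for off-diagonal indices.

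Next I would define the diagonal vectors $u_{ii}$ by a second rescaling inside each part $V_i$: at the scale of the internal diameter of $V_i$, extract along a further subsequence a unit vector $u_{ii}$ along which two points of $V_i$ stay a definite distance apart. Applying the slab condition between any $p\in V_j$, $q\in V_l$ ($j\ne l$) to the part $V_i$ forces $V_i$ into a strip orthogonal to $u_{jl}$ whose width is negligible compared to the internal diameter, hence in the limit $u_{ii}\perp u_{jl}$, giving (\ref{e2}). For (\ref{e1}), one picks points of $V_i$ and $V_j$ far along their respective internal directions $u_{ii}$, $u_{jj}$; the double-normal squeeze for such a pair, combined with the inter-part slab, forces $\langle u_{ii},u_{jj}\rangle=0$ after a short computation on the projection onto the plane spanned by $u_{ii},u_{jj}$ (this is the standard argument in \cite{PS1} that yields their upper bound $k\le d-1$, applied at the internal scale).

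Finally, and this is the main new ingredient giving (\ref{e3}), project the limit configuration onto $W:=(\lin U_1)^{\perp}$. By construction the internal spread of $V_i$ lies in $\lin U_1$, so each $V_i$ collapses to a single limit point $\bar p_i\in W$. The inter-part double-normal condition survives the projection: for any distinct $i,j,l$, the slab orthogonal to $\bar p_i-\bar p_j$ contains $\bar p_l$, so the angle $\angle \bar p_i\bar p_j\bar p_l$ is non-obtuse. Hence $\{\bar p_1,\ldots,\bar p_k\}$ is a set of $k$ points in $W$ with only non-obtuse angles, so by the Danzer--Grünbaum theorem $\dim W\ge \lceil\log_2 k\rceil$. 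Since each $u_{ij}$ with $i\ne j$ is, up to a scalar, the projection of $\bar p_i-\bar p_j$ to $W$, the span of $U_2$ contains the affine span of the $\bar p_i$'s inside $W$, yielding $\dim\lin U_2\ge \log_2 k$.

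The main obstacle will be the simultaneous double rescaling: one must zoom in to the internal diameter of each $V_i$ to see $u_{ii}$, while retaining control of the inter-part slab widths at the outer scale. This requires a careful diagonal subsequence extraction, together with a careful bookkeeping of the error terms in the slab conditions under both scales, and in particular one must guarantee that the $u_{ii}$'s are genuinely nonzero in the limit, i.e.\ that the internal spread of $V_i$ does not collapse strictly faster than every slab width relevant to $V_i$.
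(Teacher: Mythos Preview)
Your overall architecture matches the paper's: extract points from a $K_k(N)$ inside a double-normal graph, define $u_{ii}$ from an ``internal'' direction in $V_i$ and $u_{ij}$ from an ``inter-part'' direction, take limits, and finally bound $\dim\lin U_2$ via a Danzer--Gr\"unbaum-type statement. But two of your steps contain genuine gaps.

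\textbf{The argument for $u_{ii}\perp u_{jl}$ with $i\notin\{j,l\}$ does not work as written.} You say that the double-normal slab for $p\in V_j$, $q\in V_l$ traps $V_i$ in a strip orthogonal to $u_{jl}$ ``whose width is negligible compared to the internal diameter'' of $V_i$. But the width of that strip is $\|p-q\|$, an \emph{inter-part} distance, and there is no reason for this to be small relative to the internal diameter of $V_i$; typically it is much larger. So this slab places no constraint whatsoever on the internal direction $u_{ii}$. In the paper this orthogonality (their condition (\ref{e10})) is exactly the new technical core: one must first engineer, inside each $V_i$, four points $A_i,a_i,b_i,C_i$ lying almost on a line with carefully prescribed ratios (conditions (\ref{e4})--(\ref{e7})), and then the proof that $u_{11}\perp u_{23}$ proceeds by a case analysis in the plane $\aff\{b_2,b_1,a_1\}$, using that $b_3$ must lie outside the sphere on diameter $A_1b_2$ (since $\angle b_2b_3A_1\le\pi/2$) and inside several double-normal slabs simultaneously. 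This uses double-normal pairs involving \emph{several} points of $V_1$ at different scales ($A_1$, $a_1$, $b_1$), not merely a single $V_j$--$V_l$ slab. Your proposal is missing this entire mechanism; the ``obstacle'' you flag at the end about simultaneous double rescaling is in fact the content of the proof, not just bookkeeping.

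\textbf{The final step also has a gap.} You propose to project onto $W=(\lin U_1)^\perp$, obtain distinct limit points $\bar p_1,\ldots,\bar p_k$ with pairwise non-obtuse angles, and invoke Danzer--Gr\"unbaum directly. But after normalising one inter-part distance to $1$, other inter-part distances may tend to $0$, so several $\bar p_i$ may coincide in the limit; then the angle condition is vacuous for those triples and Danzer--Gr\"unbaum gives nothing. The paper circumvents this by projecting the \emph{pre-limit} points $b_i^{(n)}$ onto $\lin U_2$, showing all angles are at most $\pi/2+\delta$ with $\delta\to 0$, and then proving a separate lemma (Lemma~\ref{lem2}): for each $d$ there exists $\delta>0$ such that any $k$ points in $\R^d$ with all angles $\le\pi/2+\delta$ satisfy $k\le 2^d$. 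As the paper notes, this does \emph{not} follow from Danzer--Gr\"unbaum by a naive compactness argument precisely because limit points can merge; it requires its own inductive proof.
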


\textbf{Remark. } Bound (\ref{e01}) from Theorem \ref{thmain} follows easily from Theorem \ref{thbase} by the following observation: $d\ge \dim \lin(U_1\cup U_2)= \dim \lin U_1+ \dim\lin U_2\ge k+\lceil \log_2 k\rceil = k+D(k)$.

The rest of the section is dedicated to the proof of Theorem \ref{thbase}.
In what follows, we will use the following notation: $f(\epsilon)\gg g(\epsilon)$, if $\lim_{\epsilon \to 0} g(\epsilon)/f(\epsilon)\to 0$, in addition to the traditional $(o, O, \Omega)$-notation with respect to the limit $\epsilon \to 0$. Also, we will say that two vectors are $\epsilon$-orthogonal ($\epsilon$-parallel), if the angle between them is equal to $\pi/2+f(\epsilon)$ ($f(\epsilon)$), where $f(\epsilon)= o(1)$.

First we give a sketch of the proof.

\textbf{Step 1. } We show that for any $\epsilon>0$ there exists $N$ such that for any complete $k$-partite graph $K_k(N)$ with parts $V_1,\ldots, V_k$ of size $N$ contained in a double-normal graph and for any $i$ we can choose points $A_i, a_i, b_i,C_i$ out of $V_i$ such that the following conditions hold:
\begin{align}
  &\label{e4}\text{ Points from }\{A_i, a_i, b_i, C_i\} \text{ satisfy } \angle A_ia_ib_i\ge \pi -\epsilon, \angle a_ib_iC_i\ge \pi-\epsilon,\\
  &\label{e5}\|A_{i+1}-C_{i+1}\|\le \epsilon \|A_i-C_i\|, \\
  &\label{e6}\|A_{i}-b_{i}\|\ge 1/2 \|A_i-C_i\|,\\
  &\label{e7}\|a_{i}-b_{i}\|\le \epsilon \|A_i-b_i\|. %\|c_{i}-b_{i}\|\le \epsilon \|C_i-b_i\|.
\end{align}

\textbf{Step 2. } For the points chosen in Step 1 and sufficiently small $\epsilon>0$, set $u_{ii} = \|A_i-b_i\|^{-1}(A_i-b_i)$ and $u_{ij} = \|b_j-b_i\|^{-1}(b_j-b_i)$.
We verify the properties
\begin{align}
&\label{e8}|\langle u_{ii},u_{ij}\rangle|\le \epsilon,  i,j = 1,\ldots, k, i\neq j,\\
&\label{e9} |\langle u_{ii},u_{jj}\rangle|\le 4\epsilon,\ \ i,j = 1,\ldots, k, i\neq j,\\
&\label{e10} |\langle u_{ii},u_{jl}\rangle| = o(1),\ \ i,j,l = 1,\ldots, k, i\neq j\neq l, i\neq l.
\end{align}

\textbf{Step 3. } We set $\epsilon=1/n$ and take a subsequence of $n$'s such that each $u_{ij}^{(n)}$ converge as $n \to \infty$. The limiting set of vectors is the desired one.

\textbf{Step 4. } We give the estimate on the dimension of $\lin U_2$. \\

\textbf{Remark.} Note that we have the same notation for the vectors for a fixed $\epsilon$ and the limiting vectors, which, however, should not cause confusion since it should be clear from the context which out of the two we are dealing with. In Step 1, Step 2 we deal only with $u_{ij}$ for a fixed $\epsilon$.\\

\textbf{Step 1. } First, choose some $0<\epsilon<1$ and take a sufficiently large $N$ depending on $\epsilon$ and $d$ and apply [Theorem 2.6, \cite{P}] to each part $V_i$ of the complete multipartite graph $K_k(N)$ contained in a double-normal graph. We get that any $V_i$ contains a subset $V'_i$ of points $\{p_i^1,\ldots, p_i^{N'}\}$ such that for any $i<j<l, i,j,l = 1,\ldots, N'$ we have $\angle p_ip_jp_l\ge \pi-\epsilon$, where $N' = (2t+3)t^{k-1}$ and $t = \lceil (\epsilon\cos\epsilon)^{-1}\rceil $. We replace the original $V_i$ by $V_i'$.

Next, we modify the sets $V_i$ so that condition (\ref{e5}) holds. We do it in $k$ steps. At the first step we rename the parts in such a way that $V_1$ has the largest diameter over all $V_i$. At step $j$ we do the following to each $V_i, i=j+1,\ldots, k$. Take $V_i, i>j$ and project all the points of $V_i$ onto the segment that connects the furthest points of $V_i$ and split it into $t$ parts of equal length. By the pigeon-hole principle, there is a part that has at least $\frac 1t$-th fraction of all the points in $V_i$. Denote it by $V'_i$. The diameter of $V'_i$ is at most $\frac 1{t\cos\epsilon}$ multiplied by the diameter $V_i$, where $\frac 1{t\cos\epsilon}\le \epsilon.$ The inequality on the diameter $V'_i$ holds since any segment connecting points of $V_i$ form an angle at most $\epsilon$ with the diameter. After that we change $V_i$ for $V'_i$ and rename them in such a way that $V_{j+1}$ has the largest diameter over $V_i, i>j$.

Note that at the end of this procedure  $V_i$ has at least $N't^{-i+1}\ge 2t+ 3$ points. As $A_i$ and $C_i$ we choose the points that form a diameter of $V_i$. Thus, the condition (\ref{e5}) is satisfied.

To obtain (\ref{e7}) for some vertices in $V_i$, we again project the points of $V_i$ on $A_iC_i$ and split the segment into parts of length $\frac 1{2t}\|A_i-C_i\|$. By the pigeon-hole principle, there will be a part that contains at least two (projected) points apart from $A_i$ and $C_i$. Denote them by $b_i$ and $a_i$. We have $\|b_i-a_i\|\le \frac 1{2t}\|A_i-C_i\|$ and either $\min\{\|A_i-b_i\|, \|A_i-a_i\|\}\ge \frac 1{2}\|A_i-C_i\|$ or $\min\{\|C_i-b_i\|, \|C_i-a_i\|\}\ge \frac 1{2}\|A_i-C_i\|.$ Rename the points in such a way that they appear on $A_iC_i$ in the order $A_i,a_i,b_i,C_i$ and $\|A_i-b_i\|\ge \frac 1{2}\|A_i-C_i\|$. Now consider the preimages of $a_i,b_i$ before the projection. We also denote them by $a_i,b_i$. We have $\|a_i-b_i\|\le \frac 1{t\cos\epsilon}\|A_i-b_i\|\le \epsilon\|A_i-b_i\|$. Therefore, (\ref{e7}) is verified for $A_i,a_i, b_i,C_i.$ Moreover, (\ref{e6}) is verified as well. This completes Step 1.\\

\textbf{Step 2. }

The proof of (\ref{e8}) and (\ref{e9}) is identical to the one presented in \cite{PS1} and can be losslessly separated from the rest of the proof. Thus, we omit it, only mentioning that it uses properties (\ref{e4}), (\ref{e5}), (\ref{e6}) of the configuration.

The rest of Step 2 is devoted to verifying (\ref{e10}). We need to prove that for any pairwise distinct $i, j, l=1,\ldots, k$ the vectors $A_i-b_i$ and $b_l-b_j$ are $\epsilon$-orthogonal. For simplicity put $i=1, j=2, l=3$. Consider the two two-dimensional planes $\gamma_a = \aff\{b_2,b_1, a_1\}$ and $\gamma_b = \aff\{b_3,b_1, a_1\}$.

\begin{obs} Consider the projections $b_2^{\gamma}, b_3^{\gamma}$ of points $b_2,b_3$ on the planes $\gamma_b, \gamma_a$ respectively. Then either $b_2^{\gamma}$ lies to the same side from the line $b_1b_3$ as $a_1$  or $b_3^{\gamma}$ lies to the same side from the line $b_1b_2$ as $a_1$.
\end{obs}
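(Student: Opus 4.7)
The plan is to translate both ``same side'' conclusions into explicit sign conditions on inner products of the vectors $\vec a := a_1 - b_1$, $\vec v_2 := b_2 - b_1$, $\vec v_3 := b_3 - b_1$, and then to derive a contradiction from assuming both conclusions fail, by combining the double-normal inequalities with Cauchy--Schwarz. Translating $b_1$ to the origin we may identify $\gamma_b$ with $\lin\{\vec a,\vec v_3\}$ and write the orthogonal projection of $\vec v_2$ onto $\gamma_b$ as $\vec v_2^\gamma = \alpha_2\vec a + \beta_2\vec v_3$. The identities $\langle \vec v_2^\gamma,\vec a\rangle = \langle \vec v_2,\vec a\rangle$ and $\langle \vec v_2^\gamma,\vec v_3\rangle = \langle \vec v_2,\vec v_3\rangle$ give a $2\times 2$ linear system for $(\alpha_2,\beta_2)$ whose Gram determinant $\|\vec a\|^2\|\vec v_3\|^2 - \langle \vec a,\vec v_3\rangle^2$ is positive (assuming $a_1,b_1,b_3$ are non-collinear). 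In the orthogonal basis $(\vec u,\vec v_3)$ of $\gamma_b$ with $\vec u := \vec a - (\langle \vec a,\vec v_3\rangle/\|\vec v_3\|^2)\vec v_3$, the $\vec u$-coordinate of $\vec a$ is $+1$ while the $\vec u$-coordinate of $\vec v_2^\gamma$ equals $\alpha_2$; hence $b_2^\gamma$ lies on the same side of $b_1b_3$ as $a_1$ iff $\alpha_2>0$, and by Cramer's rule this reduces to
\[
D_{23} := \langle \vec v_2,\vec a\rangle\,\|\vec v_3\|^2 - \langle \vec a,\vec v_3\rangle\langle \vec v_2,\vec v_3\rangle \;>\; 0,
\]
with the symmetric expression $D_{32}$ (swap $2\leftrightarrow 3$) controlling $b_3^\gamma$.

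Next I would harvest the relevant double-normal inequalities. Since $\{b_1,b_j\}$ is a double-normal pair and $a_1\in V$, we obtain $\langle \vec a,\vec v_j\rangle \ge 0$ for $j=2,3$; since $\{b_2,b_3\}$ is a double-normal pair and $b_1 \in V$, substituting $b_1$ into the defining inequality at $b_2$ yields $\langle \vec v_2,\vec v_3\rangle \le \|\vec v_2\|^2$, and symmetrically $\le \|\vec v_3\|^2$. Abbreviating $A_j := \langle \vec a,\vec v_j\rangle \ge 0$, $L_j := \|\vec v_j\|^2 > 0$, and $c := \langle \vec v_2,\vec v_3\rangle$, suppose for contradiction that $D_{23}<0$ and $D_{32}<0$, i.e.\ $A_2L_3 < A_3 c$ and $A_3L_2 < A_2 c$. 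Non-negativity of the left-hand sides forces both right-hand sides to be strictly positive, giving $A_2,A_3,c>0$. Multiplying the two inequalities and cancelling the positive factor $A_2A_3$ then yields $L_2L_3 < c^2$, contradicting the Cauchy--Schwarz bound $c^2 \le L_2L_3$.

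The main (essentially only) obstacle is the bookkeeping of degenerate configurations: when $a_1,b_1,b_j$ are collinear the affine hull $\gamma_a$ or $\gamma_b$ collapses to a line and the notion of ``side'' becomes ill-defined, while the borderline case $D_{ij}=0$ corresponds to the projection $b_i^\gamma$ landing exactly on the line $b_1b_j$. Both situations are absorbed by the natural convention that a point on the line $b_1b_j$ counts as lying on the same side as $a_1$; under this convention the only scenario that could falsify the observation is $D_{23},D_{32}<0$, which is precisely what the contradiction argument above rules out.
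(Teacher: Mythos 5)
Your proof is correct, and it takes a genuinely different route from the paper's. The paper argues geometrically: it projects $b_2,b_3$ onto the line $a_1b_1$, uses $\angle a_1b_1b_i\le\pi/2$ to place both projections on the $a_1$-side of $b_1$, assumes w.l.o.g.\ that $b_3'$ is the farther one, and then shows that the line $\ell\subset\gamma_a$ through $b_3'$ orthogonal to $a_1b_1$, intersected with the double-normal strip of the pair $b_1b_2$, cannot cross the line $b_1b_2$ — so $b_3^{\gamma}$ lands in the closed half-plane of $a_1$. You instead reduce ``same side'' to the sign of the Cramer coefficient $\alpha_2$, i.e.\ of $D_{23}=A_2L_3-A_3c$ (and symmetrically $D_{32}$), and rule out $D_{23},D_{32}<0$ using only $A_2,A_3\ge 0$ and Cauchy--Schwarz; I checked the sign computation (the $\vec u$-component of $\alpha_2\vec a+\beta_2\vec v_3$ is indeed $\alpha_2\|\vec u\|^2$) and the multiplication of the two inequalities, and both are sound. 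Your version buys symmetry in $2\leftrightarrow 3$ (no w.l.o.g.), a figure-free argument, and strictly weaker hypotheses — you never need the slab condition $0\le c\le L_2$ that the paper invokes for the pair $b_1b_2$, only the non-obtuseness of the two angles at $b_1$; the paper's version is shorter to state and fits the figure-driven style of the rest of Step 2. Your closed-half-plane convention for the boundary and degenerate cases is consistent with how the observation is used later (the paper itself passes to $\langle u_{23},v_a\rangle\ge 0$ and speaks of the ``closed halfplane''), so nothing is lost there.
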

\begin{proof} Consider projections $b_2', b_3'$ of $b_2,b_3$ on the line $a_1b_1$. Since $\angle a_1b_1b_i\le \pi/2$, $i=2,3$, both $b_2',b_3'$ lie in the same closed halfline emanating from $b_1$ as $a_1$. Suppose that $b_3'$ is at least as far away from $b_1$ as $b_2'$. We then claim that $b_3^{\gamma}$ lies to the same side from the line $b_1b_2$ as $a_1$.

The point $b_3^{\gamma}$ lies on a line $l$ that is orthogonal to $a_1b_1$ and that passes through $b_3'$. We also know that $b_3^{\gamma}$ must lie between the two lines that are orthogonal to $b_2b_1$ and that pass through the points $b_2$ and $b_1$. Otherwise, the pair $b_1,b_2$ is not double-normal. Then it is easy to see that the part of $l$ that is between the two lines does not cross $b_2b_1$  and thus lies in the same closed halfplane as $a_1$.
\end{proof}

%First assume that $\langle u_{23}, v_a\rangle<0, \langle u_{23}, v_c\rangle<0$. Then, since $v_a$ and $v_c$ form an angle at least $\pi-3\epsilon$, we may apply the following easy observation:
%\begin{obs} Fix an $\epsilon >0$. If $u,v,w$ are vectors, such that $u$ and $v$ form an angle at least $\pi-\epsilon$ and $\langle u,w \rangle<0, \langle v, w\rangle<0,$ then $u,w$ and $v,w$ form an angle at most $\pi/2 +\epsilon$.
%\end{obs}
%We obtain that $|\langle u_{23}, u_{11}\rangle|<\sin4\epsilon<4\epsilon,$ which is the desired inequality.

Consider a unit vector $v_a$ that is lying in the plane $\gamma_a$, orthogonal to $b_2b_1$ and pointing to the side of $b_1b_2$ on
which $a_1$ lies. We remark that $v_a$ is $\epsilon$-parallel to $u_{11}$. In what follows, we w.l.o.g. assume that $b_3^{\gamma}$ lies to the same side from the line $b_1b_2$ as $a_1$. Or, in other words, that $\langle u_{23}, v_a\rangle\ge 0$. Consider the projection $b_3^{\gamma}$ of  $b_3$ on the plane $\gamma_a$. We are going to analyze possible locations of $b_3^{\gamma}$ using the fact that almost all pairs of points from $\{b_1,b_2,b_3,a_1,A_1\}$ form a double normal (except for the pairs formed by $\{a_1,A_1,b_1\}$). We denote the projection of $A_1$ on $\gamma_a$ by $A_1^{\gamma}$. See Figure 1, that depicts the plane $\gamma_a$. Note that all the points on the figure indeed lie in that plane, while, in general, $A_1$ does not lie in  $\gamma_a$.

\begin{center}  \includegraphics[width=80mm]{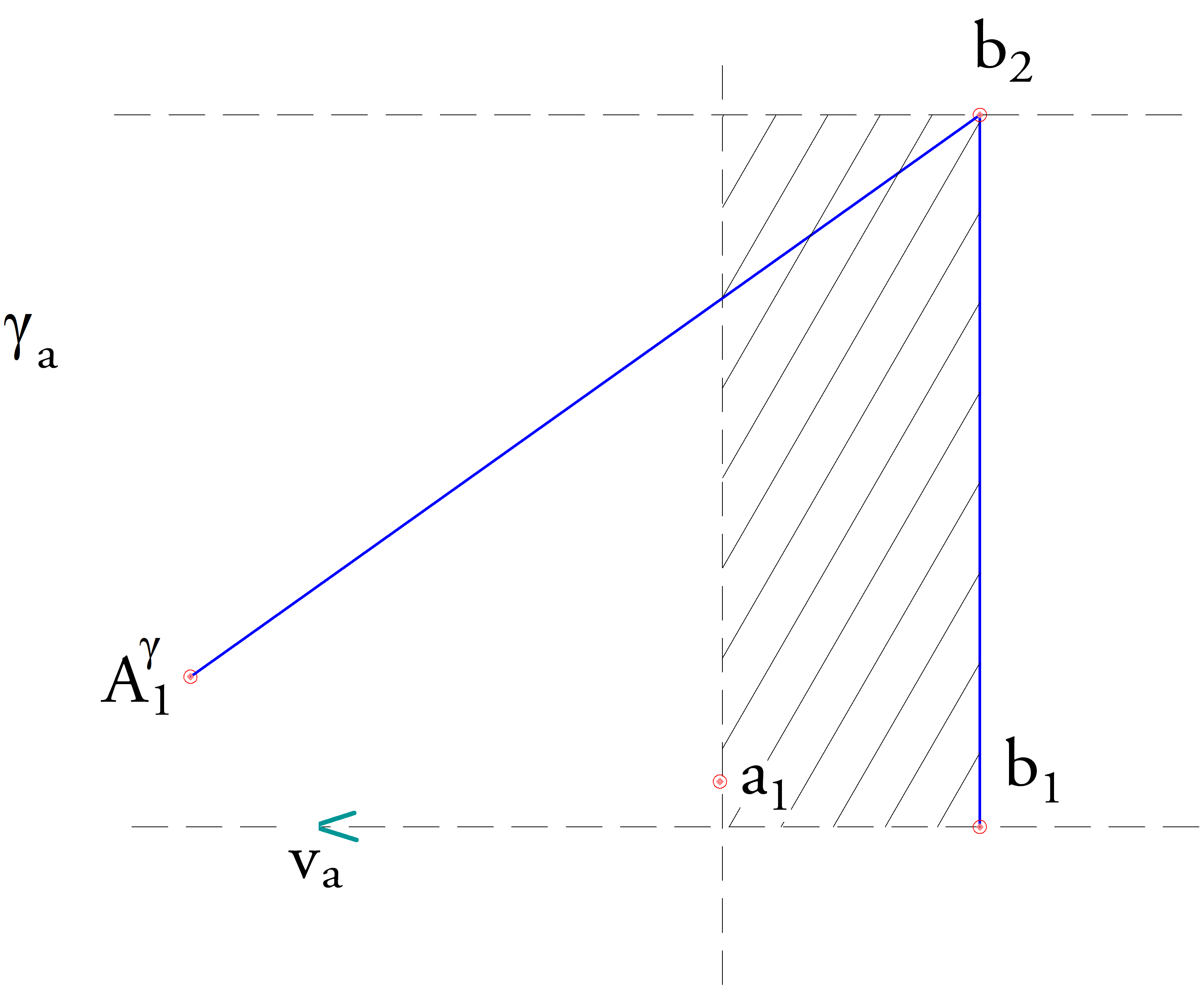}\label{fig1}  \end{center}\begin{center}  Figure 1 \end{center}

First, as we have already mentioned, since $b_1b_2$ is a double-normal pair, $b_3^{\gamma}$ lies in a strip bounded by the two lines that are orthogonal to $b_1b_2$ and that run through $b_1$ and $b_2$ respectively (see Figure \ref{fig1}). Second, $b_3^{\gamma}$ lies in a strip bounded by $b_1b_2$ itself and a line that is parallel to $b_1b_2$ and that passes through $a_1$. One part of this condition holds since $\langle u_{23}, v_a\rangle\ge 0$, while the other one holds since otherwise one of the angles $\angle b_3a_1b_2$ or $\angle b_3a_1b_1$ would be greater than $\pi/2$, which, in turn, gives that $b_3a_1$ is not a double-normal pair. Thus, $b^{\gamma}_3$ falls into the hatched rectangle.

For simplicity, we rescale the configuration so that $\|b_2-b_1\|=1.$ Next, we consider two cases. \\

\textbf{Case 1} The segment $A_1b_1$ is ``not much bigger'' than $b_2b_1$:  $\|A_{1}-b_{1}\| =O(\epsilon^{-1/3}).$

\begin{center}  \includegraphics[width=80mm]{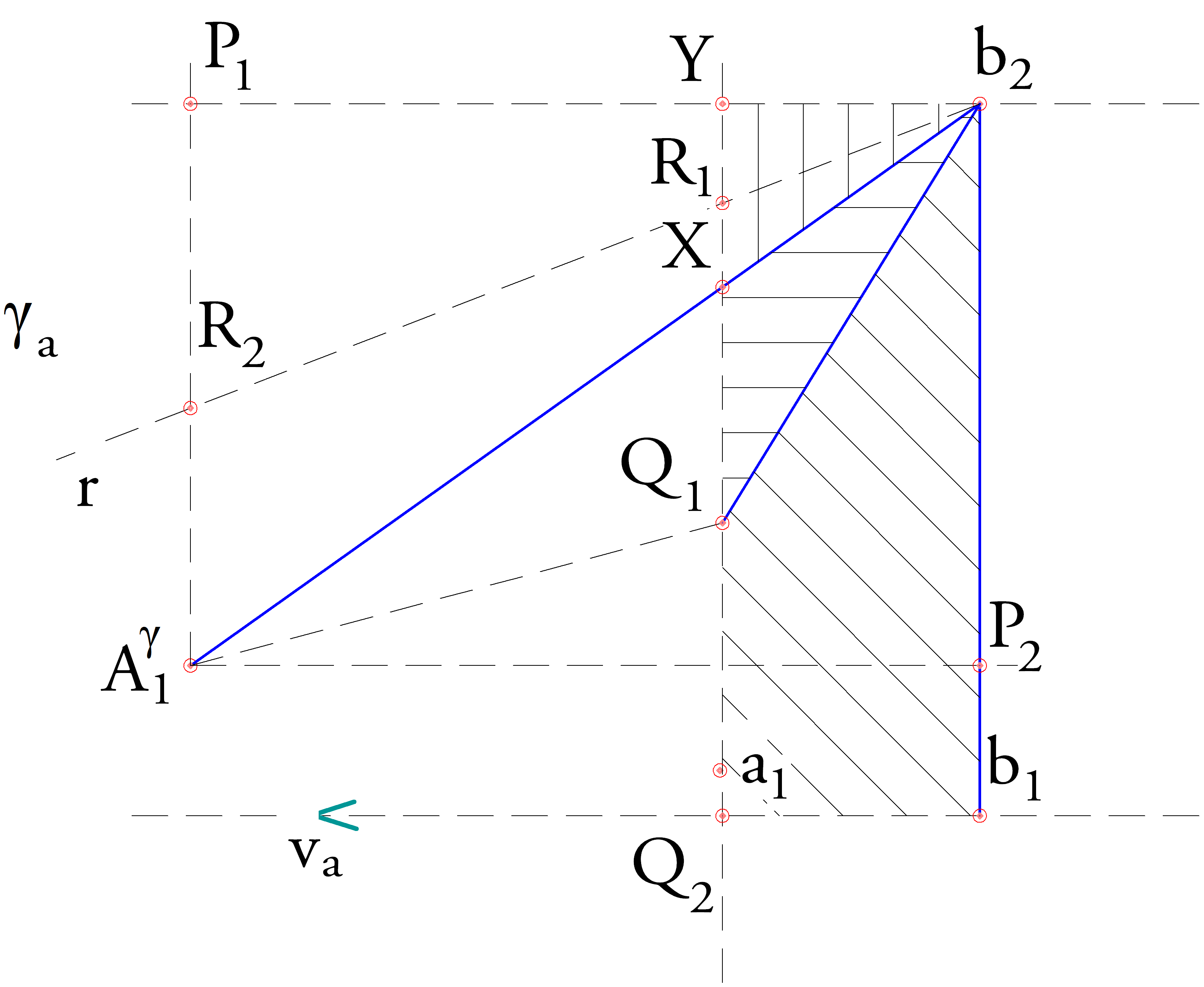}\label{fig2}  \end{center}\begin{center}  Figure 2 \end{center}

We have three differently hatched regions, in which the point $b_3$ may be projected (see Figure 2). These are two triangles $YXb_2$ and $Xb_2Q_1$, and a trapezium $Q_1b_2b_1Q_2$. Here point $Q_1$ is chosen in such a way that $\|Y-Q_1\|=\epsilon^{1/2}$. Note that the points $X, Q_1$ may appear on the segment $YQ_2$ in another order, but it does not change the proof, which is clear from the argument below.

If $b_3$ projects into the quadrangle, then
$$\tan \angle b^{\gamma}_3b_2b_1 = \frac{\langle u_{23}, v_a\rangle}{\langle u_{23}, u_{21}\rangle}\le \tan Q_1b_2b_1 = \frac{\langle Q_1-b_2, v_a\rangle}{\langle Q_1-b_2, u_{21}\rangle} = O\left(\frac{\epsilon^{2/3}}{\epsilon^{1/2}}\right) = O(\epsilon^{1/6}).$$

The inequality above holds since $b^{\gamma}_3$ falls in the angular domain $Q_1b_2b_1$, so $\angle b^{\gamma}_3b_2b_1 \le \angle Q_1b_2b_1$. The pre-last equality holds since $\|Q_2-b_1\|\le \|a_1-b_1\| \le \epsilon \|A_1-b_1\| = O(\epsilon^{2/3}). $ Thus, the vector $b_3^{\gamma}-b_2$  is $\epsilon$-parallel to $b_1-b_2$, which means that $b_3^{\gamma}-b_2$ and, consequently, $u_{23}$ is $\epsilon$-orthogonal to $v_a$ and to $u_{11}$.

For the cases when $b_3^{\gamma}$ is in one of the triangles we need the following lemma:

\begin{lem}\label{lem1} Consider a circle $S$ on the plane with the center in $O$ and with a point $B$ on the circle (See Figure 3). Take a point $A$ inside the circle, such that $AB$ is $\epsilon$-parallel to $OB$ and $\|A-B\| = o(\|O-B\|)$. Draw a line $l$ through $A$ that is orthogonal to $AB$. Then we have $\|A-B\| = o(\min\{\|P_1-A\|,\|P_2-A\|)$, where $P_1,P_2$ are the intersection points of $l$ and $S$.
\end{lem}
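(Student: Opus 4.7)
The plan is to place $B$ at the origin and $O=(-R,0)$, where $R=\|O-B\|$, so that $S$ has equation $(x+R)^2+y^2=R^2$. Write $A=(-a,b)$. The hypothesis that $AB$ is $\epsilon$-parallel to $OB$ translates into $|b|/\|A-B\|=\sin\theta=o(1)$, where $\theta$ is the angle between $AB$ and $OB$; in particular $b=o(a)$ and $\|A-B\|=a(1+o(1))$. The second hypothesis $\|A-B\|=o(R)$ is just $a=o(R)$, and both asymptotics are with respect to $\epsilon\to 0$.

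The main tool is the power of a point. Let $M$ denote the midpoint of the chord $P_1P_2$, equivalently the foot of the perpendicular from $O$ to $l$, and set $h=\|P_i-M\|$ and $m=\|A-M\|$. Since $A$ lies inside $S$ we have $m\le h$, so $\min\{\|P_1-A\|,\|P_2-A\|\}=h-m$. A direct computation of $\|O-A\|^2=(R-a)^2+b^2$ together with the Pythagorean identity $\|O-A\|^2=\|O-M\|^2+m^2$ yields
\[ h^2-m^2 \;=\; R^2-\|O-A\|^2 \;=\; 2aR-\|A-B\|^2. \]
Rearranging gives $\|A-B\|/(h-m)=\|A-B\|(h+m)/\bigl(2aR-\|A-B\|^2\bigr)$. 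Because $\|A-B\|^2=a^2+b^2=o(aR)$, the denominator equals $2aR(1+o(1))$, and $\|A-B\|=a(1+o(1))$ in the numerator, so the whole quotient is asymptotic to $(h+m)/(2R)$. The lemma therefore reduces to the single estimate $h+m=o(R)$.

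The step requiring the most care is splitting $h+m$ into its two summands and controlling each by a different hypothesis. Projecting $O$ orthogonally onto $l$ gives $m=R|b|/\|A-B\|=R\sin\theta$, which is $o(R)$ by the $\epsilon$-parallelism. For $h$ the displayed identity rearranges to $h^2=2aR-\|A-B\|^2+m^2\le 2aR+m^2$, so $h\le\sqrt{2aR}+m$; and $\sqrt{2aR}/R=\sqrt{2a/R}=o(1)$ because $a=o(R)$. Combining, $h+m=o(R)$, which closes the argument. I do not expect any serious obstacle, only one bookkeeping subtlety: neither hypothesis alone suffices, since the two small parameters $b/a$ and $a/R$ are controlled independently and enter $m$ and $h$ respectively. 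Routing the estimate through the single power-of-a-point identity above — rather than solving a quadratic for the intersection points and splitting into sub-cases depending on the relative rates of $b/a$ and $a/R$ — is precisely what allows the estimates to go through uniformly.
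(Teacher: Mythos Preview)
Your argument is correct. Both your proof and the paper's hinge on the same power-of-a-point identity: the paper writes it as $\|A-B\|\cdot\|A-B'\|=\|A-P_1\|\cdot\|A-P_2\|$ via the similar triangles $ABP_1\sim AP_2B'$ (where $B'$ is the second intersection of line $AB$ with $S$), while you write it as $h^2-m^2=R^2-\|O-A\|^2=2aR-\|A-B\|^2$ in coordinates; these are the same relation. The paper then bounds $\|A-P_2\|<\|P_1-P_2\|$ and asserts that $\dfrac{\|P_1-P_2\|}{2R\cos\angle OBA-\|A-B\|}\to 0$, which is exactly your reduction to $h+m=o(R)$. Your write-up is actually more explicit here: you spell out that $m=R\sin\theta=o(R)$ comes from the $\epsilon$-parallelism and $h\le\sqrt{2aR}+m=o(R)$ comes from $\|A-B\|=o(R)$, whereas the paper leaves the bound on $\|P_1-P_2\|$ implicit. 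So the approaches coincide, with yours filling in a step the paper elides.
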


\begin{proof}Since the triangle $ABP_1$ is similar to $AP_2B'$, we have
\begin{multline*}\frac{\|A-B\|}{\|A-P_1\|} = \frac{\|A-P_2\|}{\|A-B'\|} = \frac{\|A-P_2\|}{2\|O-B\|\cos\angle OBA-\|A-B\|}<\\
\frac{\|P_1-P_2\|}{2\|O-B\|\cos\angle OBA-\|A-B\|}\to 0\ \ \  \text{ as } \ \frac{\|A-B\|}{\|O-B\|}\to 0.\end{multline*}
Similar reasoning applies to $\|A-P_2\|$.
\end{proof}

\begin{center}  \includegraphics[width=80mm]{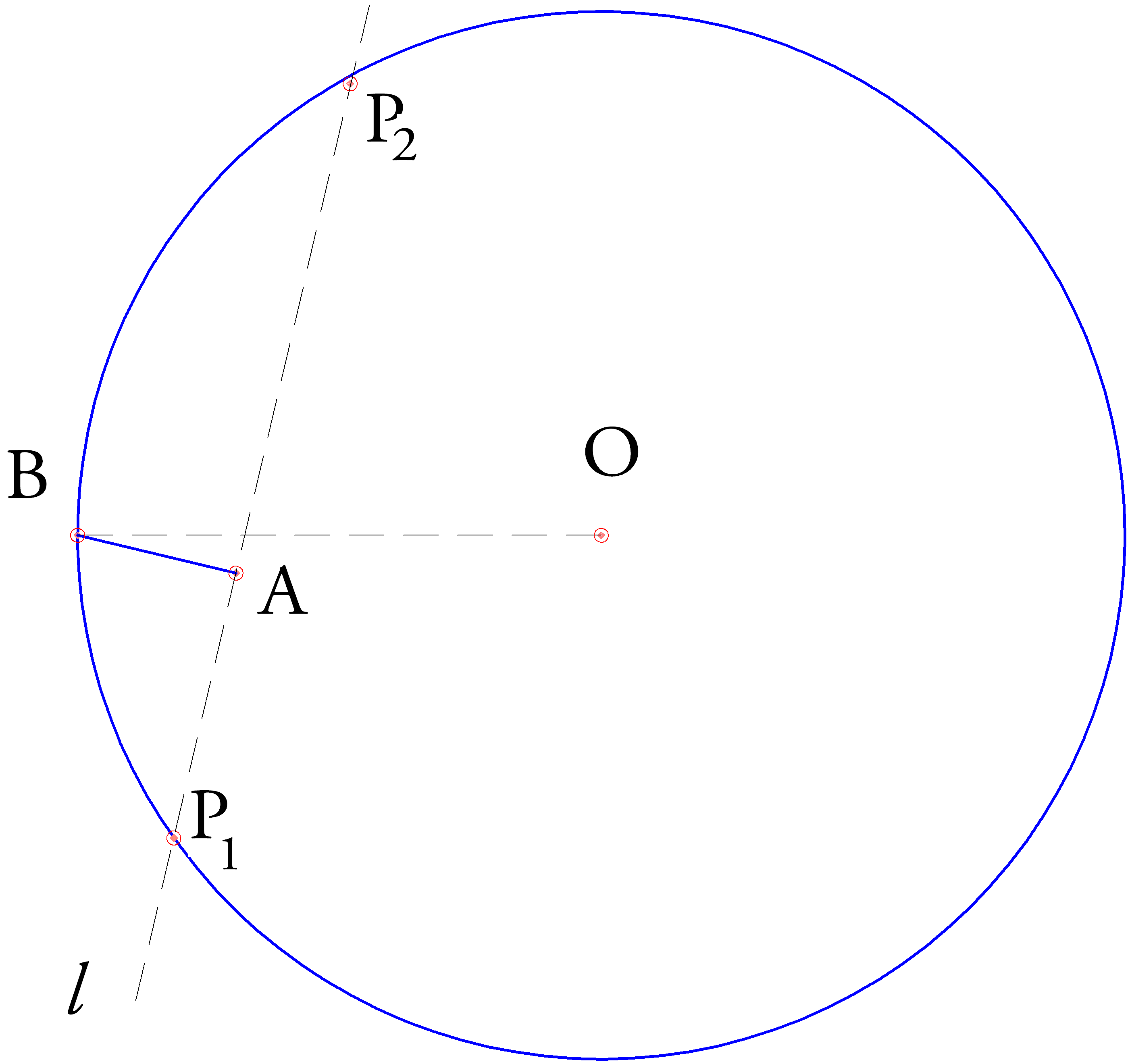}\label{fig3}  \end{center}\begin{center}  Figure 3 \end{center}

The following fact we use several times, so we formulate it as an observation:
\begin{obs}\label{obs2} If $\|b_3-b_3^{\gamma}\|\gg \|b_3^{\gamma}-b_2\|$, then $u_{23}$ is $\epsilon$-orthogonal to $u_{11}$.
\end{obs}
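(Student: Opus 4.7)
The plan is to prove the observation by a direct orthogonal decomposition of $b_3 - b_2$ with respect to the plane $\gamma_a$. Write $b_3 - b_2 = (b_3 - b_3^{\gamma}) + (b_3^{\gamma} - b_2)$; by definition of the orthogonal projection onto $\gamma_a$, the first summand is orthogonal to $\gamma_a$ while the second lies inside it. Since the reference vector $v_a$ lies in $\gamma_a$, the orthogonal component is annihilated by $v_a$, giving
\[
\langle b_3 - b_2, v_a\rangle \;=\; \langle b_3^{\gamma} - b_2, v_a\rangle, \qquad \bigl|\langle b_3 - b_2, v_a\rangle\bigr| \;\le\; \|b_3^{\gamma} - b_2\|.
\]

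Next, I would bound the length of $b_3 - b_2$ from below by Pythagoras: $\|b_3 - b_2\|^2 = \|b_3 - b_3^{\gamma}\|^2 + \|b_3^{\gamma} - b_2\|^2 \ge \|b_3 - b_3^{\gamma}\|^2$. Combining this with the hypothesis $\|b_3 - b_3^{\gamma}\| \gg \|b_3^{\gamma} - b_2\|$ yields
\[
|\langle u_{23}, v_a\rangle| \;=\; \frac{|\langle b_3^{\gamma} - b_2, v_a\rangle|}{\|b_3 - b_2\|} \;\le\; \frac{\|b_3^{\gamma} - b_2\|}{\|b_3 - b_3^{\gamma}\|} \;=\; o(1),
\]
so $u_{23}$ is $\epsilon$-orthogonal to $v_a$ in the sense defined at the start of the proof.

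Finally, since $v_a$ was already noted to be $\epsilon$-parallel to $u_{11}$, I would decompose $u_{11} = (\cos f(\epsilon))\, v_a + (\sin f(\epsilon))\, w$ for some unit vector $w \perp v_a$ with $f(\epsilon) = o(1)$, and bound
\[
|\langle u_{23}, u_{11}\rangle| \;\le\; |\cos f(\epsilon)|\cdot|\langle u_{23}, v_a\rangle| + |\sin f(\epsilon)| \;=\; o(1),
\]
which is exactly what it means for $u_{23}$ to be $\epsilon$-orthogonal to $u_{11}$. There is no real obstacle here: the statement is essentially a lemma about orthogonal projections, and the only care needed is in keeping track of the two distinct uses of ``$\epsilon$-orthogonal'' (once for $v_a$, once for $u_{11}$) and in verifying that the Pythagorean lower bound is strong enough under the assumed domination $\|b_3 - b_3^{\gamma}\| \gg \|b_3^{\gamma} - b_2\|$.
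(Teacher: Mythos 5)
Your argument is correct: the paper states Observation 2 without proof, and your orthogonal decomposition of $b_3-b_2$ relative to $\gamma_a$, combined with Pythagoras and the already-established fact that $v_a$ is $\epsilon$-parallel to $u_{11}$, is exactly the intended (omitted) justification. No gaps.
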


In the remaining triangular cases, according to Observation \ref{obs2}, it is enough to verify the assumption of the observation. Assume that $b_3^{\gamma}$ falls in the triangle $Yb_2Q_1$ and it lies on the line $r$ that passes through $b_2$. In the space $\R^d$ take a two-dimensional plane $\pi_r$ that contains $r$ and $b_3$. It is orthogonal to the plane $\gamma_a$. Denote by $S$ the (full-dimensional) sphere in $\R^d$ that has $A_1b_2$ as a diameter. Finally, consider a circle $S_r = \pi_r\cap S$ in the plane $\pi_r$. Denote its center by $O$ and the projection of $O$ on the plane $\gamma_a$ by $O^{\gamma}$.

By the choice of $A_1,a_1,b_1$ we have  $\|O-O^{\gamma}\|\le \|A_1-A_1^{\gamma}\|= O(\epsilon \|A_1-b_1\|).$ Denote by $P_1$ and $P_2$ (from Figure 2) are the projections of $A_1$ (or $A_1^{\gamma}$) on $Yb_2$ and $b_1b_2$ respectively. At the same time,
\begin{equation}\label{eq14}\|O-b_2\|\ge \frac 12\begin{cases} \|P_1-b_2\| \ \ \ \text{if }b_3^{\gamma}\in XYb_2,\\ \|P_2-b_2\|\ \ \ \text{if }b_3^{\gamma}\in XQ_1b_2\end{cases}\!\!\!=\Omega\bigl(\min\bigl\{\|A_1-b_1\|,1\bigr\}\bigr).\end{equation}
The inequality above holds due to the following. All the points $x$ of the segments $A_1^{\gamma}P_1, A_1^{\gamma}P_2$ lie inside the sphere $S$ since the angle $A_1xb_2$ is not acute. Thus, the intersection of $r$ and these segments lie inside the circle $S_r$ and the distance from the intersection point to $b_2$ is less than the diameter of $S_r$. On the other hand, any point on $A_1^{\gamma}P_i$ is farther away from $b_2$ than $P_i$. The equality in (\ref{eq14}) holds since $\|P_2-b_1\| = O(\epsilon \|A_1-b_1\|) = O(\epsilon^{2/3})$.

Therefore, $$\frac{\|O-O^{\gamma}\|}{\|O-b_2\|}\le \frac{O(\epsilon \|A_1-b_1\|)}{\Omega(\min\{\|A_1-b_1\|,1\}}= O(\epsilon^{2/3}).$$
It means that $Ob_2$ and $O^{\gamma}b_2$ are $\epsilon$-parallel, and, consequently, $Ob_2$ and $b^{\gamma}_3b_2$ are $\epsilon$-parallel.

Since $b_3$ and $b_2$ form a double-normal pair, $\angle b_2b_3A_1\le \pi/2$, which means that point $b_3$ lies on the outside of the (open) ball bounded by the sphere $S$, or, in other words, in the plane $\pi_r$ the point $b_3$ on the outside of the circle $S_r$. We also note that both $P_1$ and $P_2$ lie on $S$.

Assume for a second that we proved that $\|b_3^{\gamma}-b_2\| = o(\|O-b_2\|)$. Then we apply Lemma \ref{lem1} and obtain that $\|b_3-b_3^{\gamma}\|\gg \|b_3^{\gamma}-b_2\|$, which is the desired statement. So it is enough to show for both triangles $XYb_2$ and $Xb_2Q_1$ that the radius of $S_r$ is ``much bigger'' than $\|b_3^{\gamma}-b_2\|$.

Consider first $XYb_2$. Suppose that $r$ intersects $XY$ and $A_1^{\gamma}P_1$ in the points $R_1$ and $R_2$ respectively.
Both $R_i$ are inside $S_r$, so the leftmost point of intersection of $S_r$ and $r$ lies to the left from $A_1^{\gamma}P_1$ (or coincides with one of $P_1, A_1^{\gamma}$), thus, $2\|O-b_2\|\ge \|R_2-b_2\|$.
Therefore, $$\|b_3^{\gamma}-b_2\|\le \|R_1-b_2\|= O(\epsilon \|R_2-b_2\|)= o(\|R_2-b_2\|) = o(\|O-b_2\|),$$ where the first equality holds because of the choice of $A_1,a_1,b_1$.

Consider the case of the triangle $Xb_2Q_1$. On the one hand, in this case we have $$\|b_3^{\gamma}-b_2\| \le \|Y-Q_1\|+\|Y-b_2\| = O(\epsilon^{1/2}).$$ On the other hand, by (\ref{eq14}) we have $\|O-b_2\| = \Omega(1)$. This again gives $\|b_3^{\gamma}-b_2\| = o(\|O-b_2\|)$.\\

\textbf{Case 2} The segment $A_1b_1$ is ``much bigger'' than $b_2b_1$:  $\|A_{1}-b_{1}\| \gg \epsilon^{-1/3}.$

\begin{center}  \includegraphics[width=140mm]{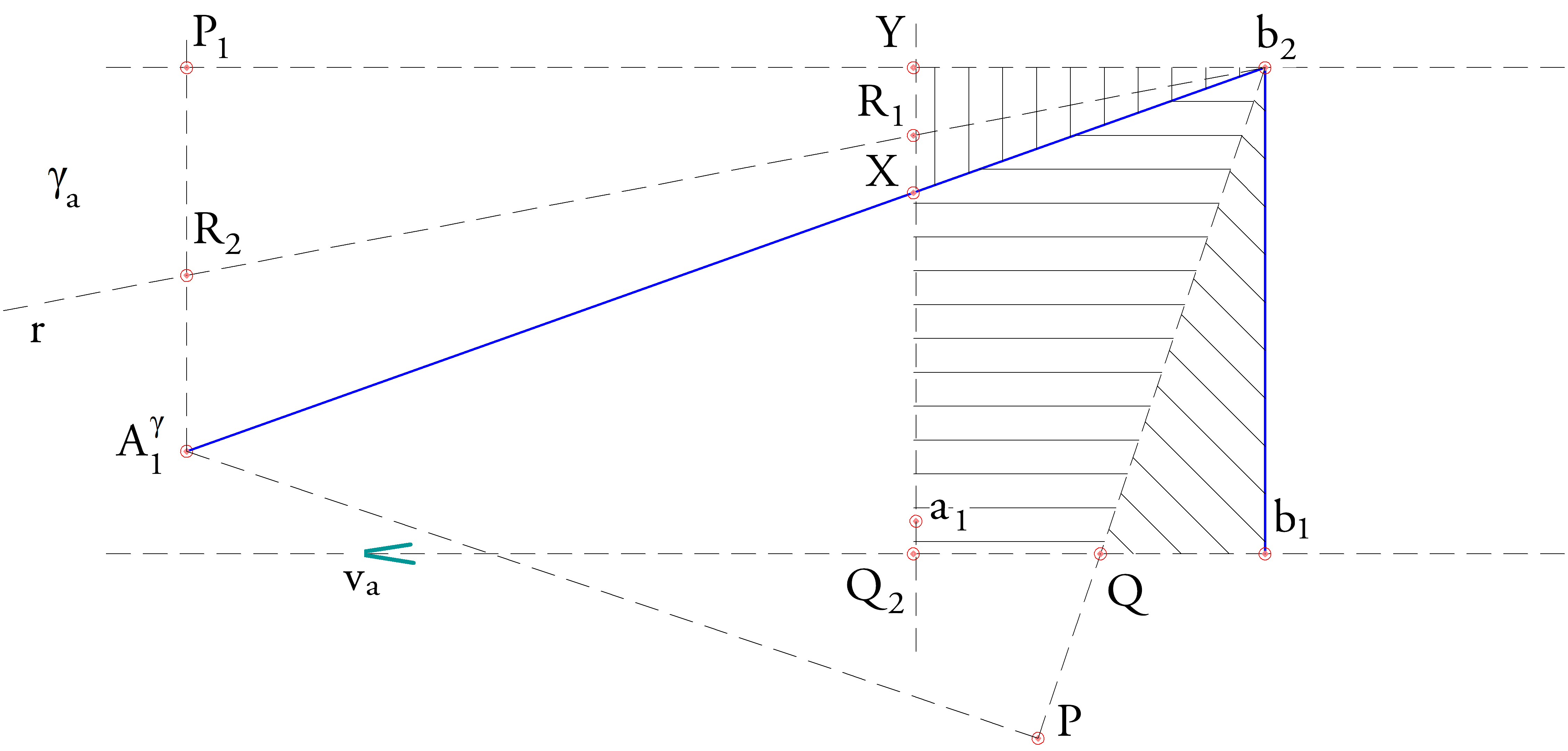}\label{fig4}  \end{center}\begin{center}  Figure 4 \end{center}

 This situation is depicted on Figure 4. The point $P$ from  Figure 4 is chosen in the following way. First, $\angle Qb_2b_1 = \epsilon^{1/6}$, and, second, $\angle A^{\gamma}_1Pb_2 = \pi/2$. The last condition is equivalent to saying that $P$ lies on the sphere with diameter $A_1b_2$. The point $Q$ is the intersection of the line $Pb_2$ and the segment $Q_2b_1$ or the segment $Q_2X$.

 In this case we again have three differently hatched regions, in which the point $b_3$ may be projected.  On the figure these are two triangles $YXb_2$ and $Qb_2b_1$, and a quadrangle $Xb_2QQ_2$. However, the line $b_2P$ may not intersect the segment $Q_2b_1$ and intersect $Q_2X$ instead.

 The argument for the situation when $b_3$ projects inside $XYb_2$ is identical to the one for that situation in Case 1.

 The vectors $P-b_2$ and $b_1-b_2$ are $\epsilon$-parallel, so if the point $b_3^{\gamma}$ lies in the angular domain bounded by $b_2Q$ and $b_2b_1$ (the triangle $Qb_2b_1$ in Figure 4), then $b_3^{\gamma}-b_2$ is $\epsilon$-parallel to $b_1-b_2$ and, thus, $u_{32}$ and $u_{11}$ are $\epsilon$-orthogonal.

 The last case to consider is when $b_3^{\gamma}$ falls inside the angular domain bounded by $A_1^{\gamma}b_2$ and $Pb_2$ (the horizontally hatched quadrangle on Figure 4). Arguing as in Case 1, consider the circle $S_r$. We again show that the
  radius $\|O-b_2\|$ of $S_r$ is ``much bigger'' than $\|b_3^{\gamma}-b_2\|$. On the one hand, we have $$\|b_3^{\gamma}-b_2\|\le \|b_1-b_2\|+ \|Y-b_2\| = O(1+\epsilon \|A_1-b_1\|).$$ On the other hand, using the same argument as the one used for the inequality in (\ref{eq14}) we see that the diameter of $S_r$ is at least $\|P-b_2\|,$ and $$\|P-b_2\|\ge \sin\angle Pb_2b_1\|A_1^{\gamma}-b_2\| = \Omega (\epsilon^{1/6}\|A_1-b_2\|)= \Omega (\epsilon^{1/6}\|A_1-b_1\|).$$
  Thus, $\|O-b_2\| = \Omega (\epsilon^{1/6}\|A_1-b_1\|).$ Combined with the condition that $\|A_{1}-b_{1}\| \gg \epsilon^{-1/3}$ we get that the $\|b_3^{\gamma}-b_{2}\| = o(\|O-b_2\|).$ At the same time, as in Case 1 we have $\|O-O^{\gamma}\| = O(\epsilon \|A_1-b_1\|)$ and therefore
 $$\frac{\|O-O^{\gamma}\|}{\|O-b_2\|}= O(\epsilon^{5/6}).$$
 Therefore,  $Ob_2$ and its projection on the plane $\gamma_a$ are $\epsilon$-parallel. Thus, we can apply Lemma \ref{lem1} and get that $\|b_3-b_3^{\gamma}\|\gg \|b_3^{\gamma}-b_2\|$. By Observation \ref{obs2} this concludes the proof. \\

\textbf{Step 3. } We set $\epsilon=1/n$ and choose $A_i^{(n)}\!,a_i^{(n)}\!,b_i^{(n)}\!,c_i^{(n)}$ as in the first part of the proof.
All vectors $u_{ij}^{(n)}$ lie on a unit sphere, which is a compact space. Thus, passing to a subsequence, we can assume that for each $i,j$ the sequence $u_{ij}^{(n)}$ converges to, say, $u_{ij}$. It is easy to see from  the properties (\ref{e8}), (\ref{e9}), (\ref{e10}) that $\langle u_{ii}, u_{jj}\rangle = 0$ and that $\langle u_{ii}, u_{jl}\rangle = 0$ for any $i, j, l = 1,\ldots, k$ if $j\neq l$. Thus, we have verified parts (\ref{e1}) and (\ref{e2}) of Theorem~\ref{thbase}.\\

\textbf{Step 4. } This is the last part of the proof, in which we are going to obtain the bound on the dimension of $\lin U_2$. To bound $\dim \lin U_2$, we want to work with points instead of vectors. All $u_{ij}$ with $i\ne j$ come as a limit of scaled $b_j^{(n)}-b_i^{(n)}$. We choose some $\delta>0$ and $n$ sufficiently large so that the angle between $u_{ij}^{(n)}$ and $u_{ij}$ is less than $\delta/2$ for any $i\ne j$. Next, we project all the points $b_i^{(n)}$ on the subspace $\lin U_2$, which is interpreted as an affine plane passing through zero. Denote by $B_i^{(n)}$ the corresponding projections. They are obviously distinct for different points if $\delta$ is small enough. Otherwise, if $B_i^{(n)} = B_j^{(n)}$ for some $i\ne j$, then $u_{ij}^{(n)}$ is orthogonal to $U$ and, consequently,  to $u_{ij}$.

The angle between $B_j^{(n)}-B_i^{(n)}$ and $u_{ij}^{(n)}$ is less than the angle between $u_{ij}^{(n)}$ and $u_{ij}$ (since $B_j^{(n)}-B_i^{(n)}$ is a scaled projection of $u_{ij}^{(n)}$ on $U$). Therefore, by the triangle inequality, for any pairwise distinct $i,j,l=1,\ldots,k$ we have $\angle B_i^{(n)}B_j^{(n)}B_l^{(n)}\le \angle b_i^{(n)}b_j^{(n)}b_l^{(n)} + \alpha_1+\alpha_2\le \pi/2+\alpha_1+\alpha_2,$ where $\alpha_1$ is the angle between vectors $u_{ji}^{(n)}$ and $B_i^{(n)}-B_j^{(n)}$ and $\alpha_2$ is the angle between vectors $u_{jl}^{(n)}$ and $B_l^{(n)}-B_j^{(n)}$. Both $\alpha_1$ and $\alpha_2$ are less than $\delta/2$, so for any any pairwise distinct $i,j,l=1,\ldots,k$ we have $\angle B_i^{(n)}B_j^{(n)}B_l^{(n)}\le \pi/2 +\delta$.

The above considerations allowed us to construct an affine plane of dimension  $\dim \lin U_2$ and a set of $k$ points on this plane without triples that form an angle bigger than $\pi/2+\delta$ for some $\delta>o$. Moreover, we may make $\delta$ as small a positive number as we want it to be. It turns out that $k\le 2^{\dim \lin U_2}$, which is stated in

\begin{lem}\label{lem2} For any natural $d$ there exists a $\delta>0$ such that the following holds.  Consider any $k$ points $p_1,\ldots, p_k$ in $\R^d$ such that for any distinct $i,j,l\in \{1,\ldots, k\}$ we have $\angle p_ip_jp_k\le \pi/2 +\delta$. Then $k\le 2^{d}$.\\
\end{lem}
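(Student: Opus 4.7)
The lemma is the stability form of the classical Danzer--Gr\"unbaum theorem, which in the notation of the introduction is the statement $D(m)=\lceil\log_2 m\rceil$: a set in $\R^d$ with all angles non-obtuse has at most $2^d$ points. My plan is to prove it by induction on $d$ combined with a compactness argument.

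The base case $d=1$ is immediate: three distinct collinear points always contain an angle equal to $\pi$, so any $\delta<\pi/2$ forces $k\le 2=2^1$. For the inductive step, assume the statement for every dimension smaller than $d$ and suppose, for contradiction, that no $\delta>0$ works in $\R^d$. Then there exist $\delta_n\to 0$ and configurations $\{p_1^{(n)},\ldots,p_{2^d+1}^{(n)}\}\subset\R^d$ with every triangle angle at most $\pi/2+\delta_n$. After translating so $p_1^{(n)}=0$ and scaling so that the diameter equals $1$, I pass to a subsequence along which each $p_i^{(n)}\to p_i^*$ and partition the indices into clusters $C_1,\ldots,C_m$ according to which limit they share, writing $q_c$ for the common limit of $C_c$ and setting $D=\dim\aff\{q_1,\ldots,q_m\}$. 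Preservation of the unit diameter in the limit gives $m\ge 2$ and $D\ge 1$, and continuity of angles on distinct triples shows that $\{q_1,\ldots,q_m\}$ is a non-obtuse set, so the classical Danzer--Gr\"unbaum theorem yields $m\le 2^D$.

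The heart of the argument is an orthogonality observation that handles the collapsing clusters. For $p_i,p_{i'}\in C_c$ and $p_l\in C_{c'}$ with $c\ne c'$, in the triangle $p_ip_{i'}p_l$ the side $p_ip_{i'}$ has length $o(1)$ while the other two sides have length $\Theta(\|q_{c'}-q_c\|)$; hence the angle at $p_l$ is $o(1)$ and the other two angles sum to $\pi-o(1)$. Together with the bound $\pi/2+\delta_n$ on each, both must tend to $\pi/2$. Consequently, when I blow up $C_c$ around $q_c$ by $1/r_c^{(n)}$ (with $r_c^{(n)}$ the diameter of $C_c$ at stage $n$) and extract a further convergent subsequence, the rescaled points lie within $o(1)$ of the subspace $V_c\subset\R^d$ orthogonal to $\lin\{q_{c'}-q_c:c'\ne c\}$, so $\dim V_c\le d-D<d$. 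Rescaling preserves angles, so this blown-up cluster still consists of $|C_c|$ distinct points with every angle at most $\pi/2+\delta_n$.

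Projecting onto $V_c$ perturbs the angles by $o(1)$, so for $n$ large the induction hypothesis in dimension $d-D$ applies and gives $|C_c|\le 2^{d-D}$ (any residual sub-coincidences after the projection are resolved by recursing the same blow-up argument inside the cluster; since the ambient dimension strictly drops at each level, this iteration terminates). Summing over clusters yields
\begin{equation*}
2^d+1=\sum_{c=1}^m|C_c|\le m\cdot 2^{d-D}\le 2^D\cdot 2^{d-D}=2^d,
\end{equation*}
the desired contradiction. The main subtlety is exactly that distinct original points may coalesce in the limit; the orthogonality observation is the tool that resolves this by confining each collapsing cluster to an ambient subspace of strictly smaller dimension, which is what allows the induction to close.
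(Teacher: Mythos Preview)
Your argument is correct and follows the same overall scheme as the paper: induction on $d$, compactness to pass to a limiting multiset, and the key observation that directions inside a collapsing cluster become orthogonal to the directions joining distinct limit points. The bookkeeping differs: you partition \emph{all} indices into clusters $C_1,\ldots,C_m$, bound each $|C_c|\le 2^{d-D}$ by projecting the rescaled cluster into the orthogonal complement $V_c$ and invoking the induction hypothesis, and then sum; the paper instead singles out the cluster of maximum multiplicity $m$, uses induction to show its limiting intra-cluster directions span a subspace $\lin U$ of dimension $\ge l=\lceil\log_2 m\rceil$, deduces that the distinct limit points lie in an affine plane of dimension $\le d-l$, and gets the contradiction from the count $|W|\ge k/m>2^{d-l}$. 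Your version is arguably more symmetric.

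One point deserves tightening. The implication ``the rescaled points lie within $o(1)$ of $V_c$'' $\Rightarrow$ ``projection onto $V_c$ perturbs angles by $o(1)$'' is false in general: if two points of the rescaled cluster are at distance $o(1)$ from each other, an $o(1)$ displacement can rotate their connecting direction arbitrarily. What actually makes the projection harmless is the stronger fact, which you did establish via the orthogonality observation, that every intra-cluster \emph{direction} is $o(1)$-close to $V_c$; from this the projection preserves each direction (hence every angle, and distinctness) to $o(1)$ regardless of how short the segment is. Once you state it this way, your parenthetical about ``residual sub-coincidences'' and a further recursion becomes unnecessary: the projected points are automatically distinct and satisfy the angle bound, so the induction hypothesis in dimension $d-D$ applies directly. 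This is exactly the mechanism the paper uses (its Step~4 argument, reused inside the lemma) when it projects the maximal cluster onto $\lin U$.
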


\textbf{Remark.} The statement of Lemma \ref{lem2} for $\delta =0$ is a result by L. Danzer and B. Gr\" unbaum \cite{DG}.  A statement equivalent to Lemma \ref{lem2} is mentioned without a proof in \cite{EF}, so it may be well known. However, in \cite{EF} it looks like the authors somewhat inaccurately attributed Lemma \ref{lem2} to Danzer and Gr\" unbaum. Thus, we present the proof of the lemma here for completeness. We remark that, contrary to intuition, it does not follow from the result due to Danzer and Gr\" unbaum via a direct compactness argument.

%We scale and translate each $\cup_{i=1}^k V_i^{(n)}$ so that the set $\{b_1^{(n)},\ldots, b_k^{(n)}\}$ for each $n$ has diameter 1 a subsequence of $n$'s that $b_i^{(n)}$ and each $u_{ij}^{(n)}$ converge as $n \to \infty$.

\begin{proof} We prove the statement by induction. It is easy to check for the plane. Now suppose we have proved it for the dimension $d-1$ and pass on to the dimension $d$. Assume that the lemma is not true in $\R^d$. Then there exists a sequence of point sets $W_j=\{p_1^j,\ldots,p_k^j\}$ in $\R^d$ so that $\|W_j\| = k = 2^{d}+1$ and the maximal angle in $W_j$ is less than $\pi/2+1/j$. Scale and translate these sets so that all $W_j$ have unit diameter and so that all of them lie in a unit ball. Passing to subsequences, we have convergent sequence, which gives a limiting multiset of points $W = \{p_1,\ldots, p_k\}$. Naturally, in $W$ all the angles are non-obtuse and $W$ is of unit diameter. If none of $p_i$ coincide, then we get a contradiction with the result by Danzer and Gr\" unbaum. Thus, at least two points among $p_i$ must coincide.

Assume that the maximum multiplicity of an element in $W$  is $m$, while the corresponding point in $W$ is, w.l.o.g., $p_1$. Denote the  subset of points that participate in the sequences that do converge to $p_1$ in every $W_j$ by $M_j$. Since $W$ is of unit diameter, all of the points in $W$ cannot coincide and, thus, $m\le k-1 = 2^d$. Choose an integer $l$ so that $2^{l-1}<m\le 2^l$, $1\le l\le d$. Consider the set of  vectors $U_j$: $U_j = \{u_{sr}^j = p_r^j-p_s^j: p_r^j,p_s^j\in M_j, r\neq j\}$ and a limiting set of vectors $U$.  We claim that, first, $\dim \lin U\ge l$ and, second, $\aff W$ is orthogonal to $\lin U$ in the sense that any vector formed by points from $\aff W$ is orthogonal to any vector from $\lin U$.

To show that $\dim \lin U\ge l$ we have to apply induction. We choose sufficiently large $j$ so that the vectors from $U_j$ are close to the limiting vectors and, as in Step 4, project all points from $M_j$ on $\lin U$. We get angles as close to $\pi/2$ as we want to (it depends on $j$ only), so for a right choice of $j$ we can apply induction to $M_j$ and $\lin U$ and get the stated inequality.

To show that $W$ spans a plane that is orthogonal to $\lin U$, we assume the contrary. Choose a point from $W$, say, $p_2$, such that the vector $p_2-p_1$ is not orthogonal to $\lin U$. This means that there is a pair of indices $s, r$ and $\epsilon>0$, so that $p_2-p_1$ and $u_{sr}$ form an angle at least $\pi/2+4\epsilon$ big. If we choose a sufficiently large $j$, then $p_2-p_s$ and $p_2^j-p_s^j$ form an angle $\alpha_1$ at most $\epsilon$, similarly, $p_r-p_s$ and $p_r^j-p_s^j$ form an angle $\alpha_2$ at most $\epsilon$ and, finally, there is no angle bigger than $\pi/2+\epsilon$ in $W_j$. But then we arrive at a contradiction, since on the one hand, $\angle p_r^jp_s^jp_2^j\le \pi/2+\epsilon$, on the other hand, $\angle p_r^jp_s^jp_2^j\ge\angle p_rp_sp_2-\alpha_1-\alpha_2\ge \pi/2+2\epsilon$.

Therefore, $W$ has at least $\frac{2^d}m +1\ge 2^{d-l}+1$ distinct points and $\dim \aff W \le d-\dim \lin U\le d-l$. We may apply induction and arrive to a contradiction.
\end{proof}
This proves (\ref{e3}) and completes the proof of Theorem \ref{thbase}.\\

 \textbf{Remark. } As we have promised, we discuss here the differences between Theorem \ref{thbase} and Proposition 8 from \cite{PS1}.

 In the formulation (and in essence) the main difference is that in Theorem \ref{thbase} we have a much more general condition (\ref{e2}) rather than the condition ``$u_{ii}$ and $u_{ij}$ are orthogonal, if $i\ne j$'' from Proposition 8. We  also have the condition (\ref{e3}) that bounds the dimension of $\lin U_2$. This, as well as Step 4, is not present in the proposition.

 Step 1 in our proof is really similar to Step 1 from the proposition. A large part is just identical to the analogous part from the proposition. However, we choose 4 points (with specific distances) instead of 3 in the proposition, so the last paragraph of Step 1 is new.

 The parts of Step 2 that are presented (recall that we omit the proof of (\ref{e8}) and (\ref{e9})) are completely new.

 Step 3 is really similar to that step in the proposition, however, it is shorter, since we are working only with vectors.

 As we have already mentioned, there is no analogue of Step 4 in the paper \cite{PS1}.

\section{Lower bound on $\mathbf{k'(d)}$}
The proof of (\ref{e02}) is based on a slight modification of the proof of Theorem~\ref{thps2}. In this section we first give a complete proof of (\ref{e02}) and then the sketch of the original proof of Pach and Swanepoel. In this way the reader would have the chance to compare the two proofs. We split the section into two subsections.

\subsection{Proof of (\ref{e02})}

\noindent\textbf{Step 1. } Fix $p_1\ldots, p_m\in \R^d$ so that they form only acute angles. Thus, they are in convex position. For each $p_i$ choose a hyperplane $H_i$ that touches the convex hull of $\{p_1\ldots,p_m\}$ only in $p_i$ and a normal vector $u_i$ to $H_i$, that points towards the convex hull. Embed $\R^d$ as a plane $\Gamma$ in $\R^{d+m}$. For each $p_i$ consider a two-dimensional plane $\Pi_i$ spanned by $u_i$ and $v_i$, where $v_i$ are orthogonal to $\Gamma$ and are mutually orthogonal as well.\\

\noindent\textbf{Step 2. } For every $i$ do the following. Project all $p_j$ on $\Pi_i$ (they all fall on a half-line that starts at $p_i$ and goes in the direction of $u_i$). Choose point $c_i$ on the half-line so that the circle $C_i$ in $\Pi_i$ with center in $c_i$ and that passes through $p_i$ contains strictly inside all the projections of $p_j$. Denote the other intersection of the halfline and the circle by $q_i$ (See Fig. 5, where we omitted the subscript indices $i$ for simplicity). Choose an $\epsilon$ and a segment $I_i$ of the halfline in such a way that $I_i$ is disjoint with the $\epsilon$-neighborhoods of the points $p_i,q_i$, but contains each $p'_j$ with an $\epsilon$-neighborhood (on the halfline). The endpoints of $I_i$ are denoted by $a$ and $b$.

%By $I_i$ denote the segment $c_iq_i$, out of which we exclude $\epsilon$-neighborhoods of $c_i,q_i$ for some small $\epsilon>0$. Make sure that $\epsilon$ is so small that $p'_i$ with their $\epsilon$-neighborhood fall inside $I_i$.

% Take a circle $C_i$ with the center on the half-line, so that $C_i$ passes through $p_i$ and $q_i$, but now the points $p'_i$ need only to belong to an open segment $p_iq_i$. We .\\

\begin{center}  \includegraphics[width=100mm]{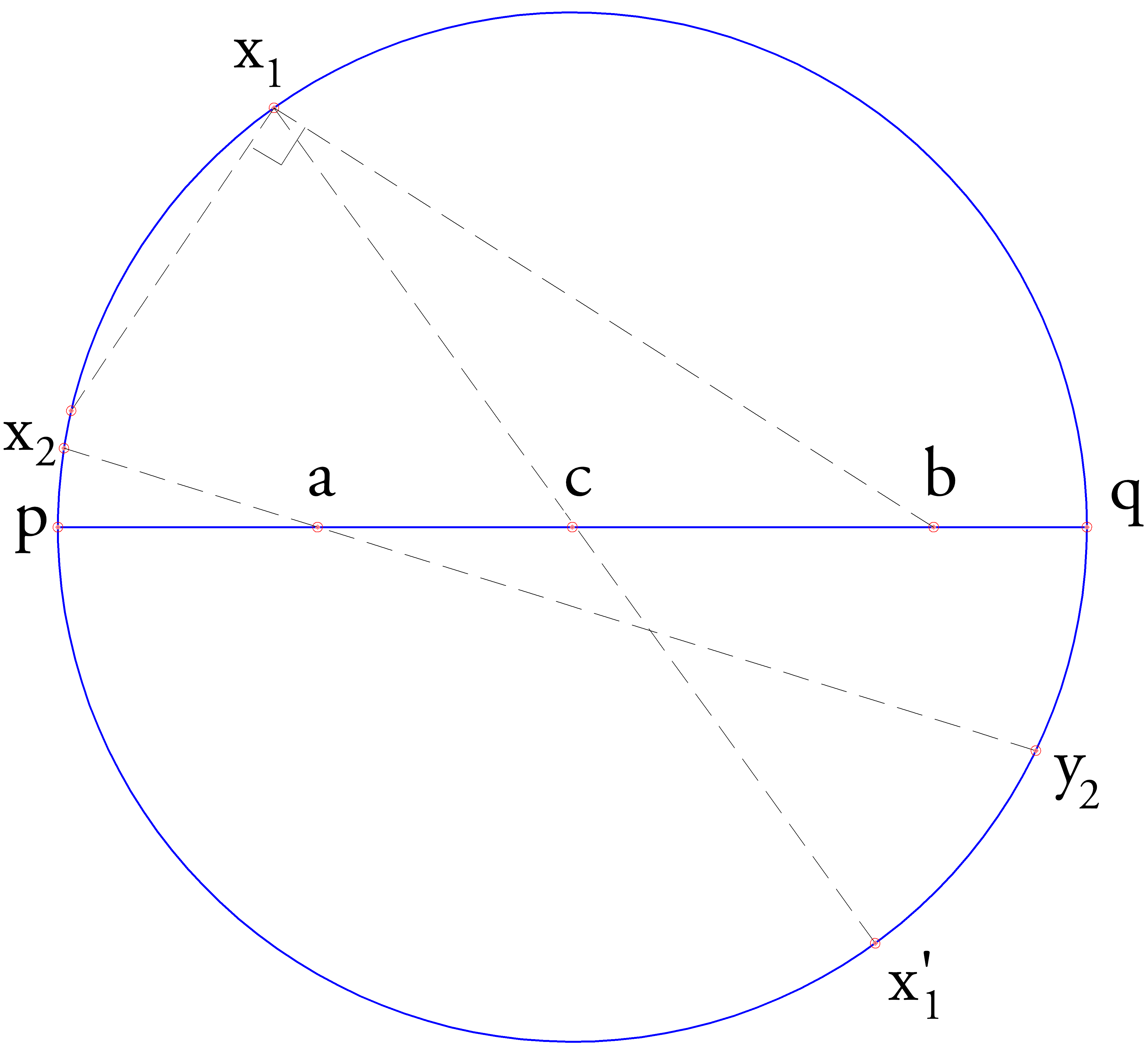}\label{fig5}  \end{center}\begin{center}  Figure 5 \end{center}

\noindent\textbf{Step 3. } In this step we construct the vertices of the (supergraph of a) complete multipartite graph $K_m(N)$. Choose points $x_1^i,\ldots, x_N^i\in C_i$ in the following way. First, for any $z$ that belongs to the segment $I_i$ and for any $s,l\in \{1,\ldots, N\}, s\ne l,$ we have $\angle zx_sx_l< \pi/2$. Second, all $x_s$ are in $\epsilon$-neighborhood of $p_i$. Put $V_i = \{x_1^i,\ldots, x_N^i\}$.

In what follows we justify that such a choice is possible. For simplicity, for the rest of Step 3 we drop all the subscripts (and superscripts) $i$ that refer to the part $V_i$. The second condition is very easy to satisfy, thus, we only have to show how to choose $x_s$ such that for any $z\in I$ and for any different $s,l=1,\ldots, N$ we have $\angle zx_sx_l< \pi/2$. Recall that the endpoints of $I$ are denoted by $a,b$ (see Figure 5). We describe the procedure of choosing $x_s$ recursively. We take as $x_1$ an arbitrary point from the upper semicircle that is $\epsilon$-close to $p$ (but, obviously, different from $p$). Next, having chosen $x_1,\ldots, x_s,$ we choose $x_{s+1}$ from the arc $px_s$ so that all the angular constraints are satisfied. It is easy to see that for any $0\le k\le s-1$ and any point $z$ between $a$ and $b$ we have $\angle x_{s+1}x_{s-k}z\le \angle x_{s+1}x_{s}z$ and $\angle x_{s-k}x_{s+1}z\le \angle x_{s},x_{s+1},z$. Moreover, $\angle x_{s}x_{s+1}z\le \angle x_{s}x_{s+1}a$ and $\angle x_{s+1}x_{s}z\le \angle x_{s+1}x_{s}b$. Therefore, we need to satisfy only two constraints: $\angle x_{s+1}x_sb$ is acute, and $\angle x_sx_{s+1}a$ is acute.

To satisfy the first constraint, we draw a chord from $x_s$ that is orthogonal to $bx_s$ and choose $x_{s+1}$ from an open arc connecting $p$ and the second intersection point of the chord.

Next we explain how to satisfy the second constraint. Assume we made a choice of $x_{s+1}$. Consider a point $x'_s\in C$, which is symmetric to $x_s$ with respect to the center $c$. Next, consider a chord that contains $x_{s+1}$ and $a$. Its other intersection with $C$ denote by $y_{s+1}$. If $y_{s+1}$ lies on an arc  $x_s'q$, then $\angle x_sx_{s+1}a$ is acute. Thus, it suffices to choose $x_{s+1}$ on the open arc $a_sp$, where $a_s$ is the second intersection point of the chord that passes through $x_s'$ and $a$.

It is clear that we can satisfy both restrictions if we choose $x_{s+1}$ close enough to $p$.\\

\noindent\textbf{Step 4. } In this step we verify that the strict double-normal graph on the set of vertices  $\cup_i V_i$ contains $K_m(N)$ as a subgraph. We need to show that any two points $y,z$ from different $V_i$ form a strict double normal. For that we need to do a case analysis. We assume throughout that $\epsilon$ is chosen to be sufficiently small. Take $x,y,z\in \cup_i V_i$. If $x,y,z$ are all from different parts $V_i,V_j,V_k$, then $\angle xyz$ is acute since points $x,y,z$ are $\epsilon$-close to $p_i,p_j,p_k$ and $\angle p_ip_jp_k$ is acute. If $x,z\in V_i$, $y\in V_j$, then $\|x-z\|$ is small compared to $\|x-y\|,\|z-y\|$, so the angle $\angle xyz$ is small and, thus, acute. The last case is if $x,y\in V_i$ and $z\in V_j$. Then $\angle xyz$ is acute since the angle $\angle xyz'$ is acute, where $z'$ is a projection of $z$ on $\Pi_i$. In turn, since $z'$ is in the $\epsilon$-neighborhood of $p'_j$, we have $z'\in I_i$. Therefore, Step 3 tells us that $\angle xyz'$ is acute. Thus, $\cup_i V_i$ indeed contains $K_m(N)$ as a subgraph.

\subsection{Sketch of the proof of Theorem \ref{thps2}}

\textbf{Step 1. } Fix $p_1\ldots, p_m\in \R^d$ and $u_1,\ldots, u_m\in
\R^d$ as in the assumption of the theorem. Embed $\R^d$ as a plane $\Gamma$ in $\R^{d+m}$. For each $p_i$ consider the two-dimensional plane $\Pi_i$ spanned by $u_i$ and $v_i$, where $v_i$ are orthogonal to $\Gamma$ and are mutually orthogonal as well.\\

\noindent\textbf{Step 2. } For every $i$ do the following. Project all $p_j$ on $\Pi_i$ (they all fall on a half-line that starts at $p_i$ and goes in the direction of $u_i$). Choose point $c_i$ on the half-line so that the circle $C_i$ in $\Pi_i$ with center in $c_i$ and that passes through $p_i$ has all the projections of $p_j$ inside an open segment  $c_iq_i$, where $q_i$ is the intersection of the halfline and the circle, which is different from $p_i$. By $I_i$ denote the segment $c_iq_i$, out of which we exclude $\epsilon$-neighborhoods of $c_i,q_i$ for some small $\epsilon>0$. Make sure that $\epsilon$ is so small that $p'_i$ with their $\epsilon$-neighborhood fall inside $I_i$.\\

\noindent\textbf{Step 3. } Choose points $x_1^i,\ldots, x_N^i\in C_i$ in the following way. First, for any $z$ that belongs to the segment $I_i$ and for any different $j,l=1,\ldots, N$ we have $\angle zx_jx_l< \pi/2$. Second, all $x_i$ are in $\epsilon$-neighborhood of $p_i$. Put $V_i = \{x_1^i,\ldots, x_N^i\}$.\\

\noindent\textbf{Step 4. } This step repeats word for word Step 4 from the previous section (or, rather, the way around: Step 4 in the proof of (\ref{e02}) is the same as Step 4 in this proof).\\

\textbf{Remark. } The modifications that lead to the proof of the lower bound in Theorem \ref{thmain}, actually simplify significantly the proof of the lower bound as stated in Theorem \ref{thps}. It is due to the fact that we may use an Erd\H os-F\"uredi-type bound right away, while in \cite{PS1} the authors had to consider and analyze an analogue of the Erd\H os-F\" uredi construction that satisfies an additional restriction on scalar products.

\section{Acknowledgements} The author is very grateful to J\'anos Pach, Sasha Polyanskii and Konrad Swanepoel for several stimulating discussions on the subject and to one of the reviewers, who had very carefully read the manuscript and pointed out several problems with the proofs and the exposition. It helped to improve a lot the presentation of the paper.


\begin{thebibliography}{111}
\bibitem{DG} L. Danzer, B. Gr\" unbaum, \textit{\" Uber zwei Probleme bez\" uglich konvexer K\" orper
von P. Erd\H os und von V. L. Klee}, Math. Zeitschrift 79 (1962), 95--99.

\bibitem{Ea} P. Erd\H os, \textit{Some unsolved problems}, Michigan Math. J. 4 (1957), 291--300.

\bibitem{EF} P. Erd\H os, Z. F\" uredi, \textit{The greatest angle among n points in the
d-dimensional Euclidean space}, Annals of Discrete Math. 17 (1983), 275--283.

\bibitem{H} V. Harangi, \textit{Acute sets in Euclidean spaces}, SIAM J. Discrete Math. 25 (2011), N3, 1212--1229.

\bibitem{MS} H. Martini, V. Soltan, \textit{Antipodality properties of finite sets in Euclidean space}, Discrete Mathematics 290 (2005), 221--228.

\bibitem{P} J. Pach, \textit{A remark on transversal numbers}, In: The mathematics of Paul
Erdos II, eds. R. L. Graham et al., Algorithms and Combinatorics, 14,
Springer, Berlin (1997), 310--317.

\bibitem{PS1} J. Pach, K. Swanepoel, \textit{Double-normal pairs in space},  Mathematika 61 (2015), N1, 259-272. arXiv:1404.0419v1

\bibitem{PS2} J. Pach, K. Swanepoel, \textit{Double-normal pairs in the plane and on the sphere}, Beitrage zur Algebra und Geometrie 56 (2015), N2, 423--438. arXiv:1404.2624v1
\end{thebibliography}
\end{document}